\documentclass{article}

\usepackage[utf8]{inputenc}

\usepackage{lmodern}       
\usepackage{babel}

\usepackage{indentfirst}
\usepackage{amssymb}
\usepackage{amsmath}
\usepackage{amsfonts}
\usepackage{graphicx}
\usepackage{amsthm}
\usepackage{tipa}
\usepackage{mathrsfs}
\usepackage{enumitem}
\usepackage{xcolor}

\title{Well-posedness of a stochastic Navier–Stokes system with dynamically coupled subgrid scales}
\date{}
\author{Debussche Arnaud$^{2,3}$, Mémin Etienne$^{1}$, Moskowitz Sébastien$^{1}$\\[0.5em]
\small $^{1}$Univ Rennes, Inria, Odyssey, IRMAR-UMR 6625, Centre Inria Rennes 35042 Rennes Cedex, France\\
\small $^{2}$Univ Rennes, CNRS, IRMAR-UMR 6625, F-35000 Rennes, France\\
\small $^{3}$Institut universitaire de France (IUF)}

\newcommand{\R}[1]{\mathbb{R}^{#1}}
\newcommand{\N}{\mathbb{N}}

\newcommand{\normeH}[1]{\left\vert #1\right\vert_H}
\newcommand{\norme}[2]{\left\Vert #1\right\Vert_{#2}}
\newcommand{\scalar}[4][\big]{#1(#2,#3#1)_{#4}}
\newcommand{\bsym}[1]{\boldsymbol{#1}}
\newcommand{\domA}[1]{\mathcal{D}(A^{#1})}
\newcommand{\esp}[2][\big]{\mathbb{E}#1[#2\mathclose{#1]}}

\newcommand{\F}[3][]{F^{#1}\big[{#2}\big]({#3})}
\newcommand{\G}[3][]{G^{#1}\big[{#2}\big]({#3})}
\newcommand{\bcdot}{\bsym{\cdot}}

\newcommand{\rev}[1]{#1}

\newtheorem{lemme}{Lemma}
\newtheorem{theorem}{Theorem}
\newtheorem{definition}{Definition}

\begin{document}

\maketitle

\begin{abstract}
We study a stochastic Navier--Stokes system in which the spatial modes of the transport noise evolve dynamically and are coupled to the resolved velocity. The model couples a Navier--Stokes stochastic PDE to an infinite family of linearized Navier-Stokes equations for the noise correlation modes. These latter equations may contain a hyperviscosity. We prove existence of martingale solutions in two dimensions for hyperviscosity exponent $s\geq1$, and pathwise uniqueness for $s>1$ under additional regularity of the initial noise modes. In three dimensions, we establish existence of martingale solutions for $s>3/2$. The proof combines Galerkin approximations and compactness with estimates adapted to the coupled drift and stochastic terms.
\end{abstract}

\section{Introduction}\label{sec:introduction}

Resolving interactions across the full range of spatial and temporal scales in turbulent geophysical flows remains well beyond the capabilities of present and near-future numerical methods. The range of active scales is immense, extending from solar-driven motions on the order of 10,000 km down to dissipative processes at the millimeter scale. Within this vast cascade, subgrid processes play a central role. They must capture the spread of possible realizations arising from uncertain, coarse initial data, represent energy transfers across scales, and ensure the overall stability of the simulation. Moreover, the coexistence of multiple physical processes — such as waves and turbulence — together with intermittent forcing further complicates these interactions. Probabilistic modeling offers a natural and promising framework for addressing these challenges while providing natural ensemble forecasting capabilities.\\

Among the probabilistic frameworks proposed in recent years \cite{Berner_et_al, Gottwald_et_al}, transport- or advection-based approaches have established themselves within the mathematical and physical communities \cite{Agresti-et-al-2022, Debussche_Memin, Memin04032014, Flandoli-Pappalettera-2023, Lang2023}. These models exhibit appealing mathematical properties \cite{Flandoli-Galeati-Luo-2021, Street-Crisan-23, Goodair2024} and enable the systematic derivation and justification of stochastic representations grounded in physical conservation laws \cite{Agresti-et-al-2022, Li2025, Street-Crisan-23}. These frameworks have been implemented and tested across a wide range of configurations, from idealized models to realistic numerical codes \cite{Bauer_Chandramouli_Li_Memin, Brecht_Li_Bauer_Memin, Cotter_et_al_2019, Chapron_Derian_Memin_Resseguier, Resseguier_Memin_Chapron, Resseguier_Memin_Chapron_2}. They have been shown to represent several physical mechanisms such as waves \cite{Li_Lahaye_Memin_Deremble, Memin_Chapron_Marie, Li_Memin_Tissot} and deep convection \cite{Jamet_Memin_Dumas_Li_Garreau}, and to extend to principled compressible settings  \cite{Tissot_Memin_Jamet}.
Most of these frameworks nevertheless assume that the noise is prescribed, either from data or through a specified functional form that may depend on the resolved large-scale flow. However, no explicit dynamics is imposed on the small-scale stochastic component. In all these models based on stochastic partial differential equations (SPDEs), the noise is typically represented as a spatially correlated, temporally white process. Its structure is prescribed a priori and, from a mathematical standpoint, often requires strong assumptions on the regularity of the noise basis to ensure well-posedness.
 
In this work, we focus on an extension of the modeling under Location Uncertainty (LU) framework \cite{Memin04032014}, a modeling setting based on transport noise derived from the Reynolds transport theorem. This extension, introduced in \cite{Debussche_Memin}, is obtained from two coupled variational principles: a pathwise principle governing the large-scale dynamics, together with a second principle formulated in expectation. This latter principle provides evolution equations for the noise correlation basis functions and thereby introduces a genuine dynamics for the small scales, which is coupled to the SPDE governing the large-scale components.
For the Euler equations, the resulting small-scale dynamics takes the form of a linearized Euler-type equation involving transport and stretching by the large-scale flow, while the large-scale component evolves according to an LU-type SPDE. It is important to emphasize that the small-scale linear model does not arise from a classical linearization procedure, but directly from the variational principle in expectation. Finally, the small-scale dynamics can be interpreted as a generalization of the classical Kraichnan–Tennekes \cite{Kraichnan_1961} turbulence model, in which small scales are advected by a large-scale random velocity field.

The well-posedness of the LU equations has been studied in \cite{Debussche_Hug_Memin_2023}. More broadly, within the class of Navier–Stokes-type equations, establishing well-posedness remains a central issue in mathematical fluid dynamics. Notably, the question of global regularity for the three-dimensional Navier–Stokes equations remains open to this day.
For the deterministic Navier-Stokes equations, the existence of weak solutions was established by Leray in the 1930s, see \cite{Leray1934}. In contrast, stochastic Navier–Stokes equations with additive noise have been shown to be well-posed {since the 1970s,} see \cite{BENSOUSSAN1973195}. In particular, the work of F. Flandoli and D. Gatarek \cite{FlandoliGatarek1995} provides a foundational result on existence  for stochastic Navier–Stokes equations, adapting techniques from the deterministic theory to the stochastic framework. The next step in the study of stochastic Navier-Stokes equation is to consider transport-related noise. The well-posedness results were adapted to this equation in \cite{Mikulevicius_Rozovskii_2005}.

{The purpose of this paper is to establish well-posedness for this coupled system. Unlike the prescribed-noise setting of \cite{Debussche_Hug_Memin_2023}, the coefficients of the stochastic transport operator and of its It\^o correction are themselves unknowns. Thus compactness must be obtained simultaneously for the velocity and for the infinite family of evolving noise modes. Compared with \cite{Debussche_Memin}, we consider the viscous Navier--Stokes system and address its stochastic well-posedness, introducing hyperviscosity in the noise-mode equations when additional regularity is needed.}

{In two dimensions we construct martingale solutions for $s\geq1$ and prove pathwise uniqueness for $s>1$ under additional regularity of the initial noise modes. In three dimensions we obtain martingale solutions for $s>3/2$. The proofs use Galerkin approximation, compactness and a coupled stability estimate.}

A fundamental difficulty of the system studied here, compared with the standard LU equations with prescribed noise, is its quasilinear character. Indeed, the diffusion term in the large-scale Navier--Stokes equation involves products of the form

$$
\sum_{i\geq 0}\nabla\cdot\bigl(\xi_i\xi_i^\top\nabla u\bigr),
$$
where both the large-scale velocity \(u\) and the noise modes \((\xi_i)_{i\geq0}\) are unknowns. Thus, the effective diffusion operator itself evolves with the solution. This substantially complicates the compactness and stability analysis and motivates the introduction of hyperviscosity in the evolution equations for the noise modes.

\section{A coupled multiscale system}\label{sec:a_coupled_system}

In this section, we present the coupled system and all the terms involved. We also recall the notions of martingale and pathwise solutions.

\subsection{Mathematical framework}
We consider a smooth bounded domain $\mathcal{S} \subset \R{d}.$ Let $\bigl(\Omega, \mathcal{F}, \{\mathcal{F}_t\}_{t \geq 0}, \mathbb{P}\bigr)$ be a filtered probability space, composed of a measurable space $(\Omega, \mathcal{F})$, a filtration $\{\mathcal{F}_t\}_{t \geq 0}$ on $\mathcal{F}$, and a probability measure $\mathbb{P}.$ We define on this filtration a sequence of independent real-valued Brownian motions $(\beta_i)_{i \in \N}.$ In this article, we study the following system of coupled stochastic equations (see \cite{Debussche_Memin} for a complete derivation of the system). The unknowns are the fluid velocity and pressure $(\bsym{u}, p)$ for the first equation, and the sequence of noise correlation functions together with a pressure term $\bigl(\bsym{\xi} = (\bsym{\xi}_i)_{i \in \N}, (q_i)_{i \in \N}\bigr)$ for the second equation:
\begin{equation}
	\label{eq:systeme}
	\left\{
	\begin{aligned}
		&d_t \bsym{u} - \nu_1 \Delta \bsym{u} + \Bigl[(\bsym{u}  \bcdot \nabla) \bsym{u} - \nabla \bcdot \sum_{i=0}^{\infty} \bsym{\xi}_i \bsym{\xi}_i^{\intercal} \nabla\bsym{u} \Bigr]dt \\
        &\qquad\qquad\qquad\qquad\qquad= -\sum_{i=0}^{\infty} (\bsym{\xi}_i d\beta_i \bcdot \nabla) \bsym{u}  - \nabla (dp- dp_{\sigma}),  \\
		&\partial_t \bsym{\xi}_i - \nu_2 \Delta^s \bsym{\xi}_i + (\bsym{\xi}_i \bcdot \nabla)\bsym{u} + (\bsym{u} \bcdot \nabla) \bsym{\xi}_i = -\nabla q_i, \qquad i \in \mathbb{N}, \\[0.5em]
        &\nabla \bcdot \bsym{u} = 0, \quad \nabla \bcdot \bsym{\xi}_i = 0 , \qquad i \in \N.
	\end{aligned}
	\right.
\end{equation}
Here, $s \geq 1.$  The unknowns are random processes depending on time and space: $t \in [0,T]$ and $\bsym{x} \in \mathcal{S}.$ Both $\nu_1, \nu_2 > 0$ are two positive real numbers. We now define the Leray projection as the orthogonal projection on the space $H$ of square integrable functions with $0$ divergence (a formal definition is given below):
\begin{equation*}
P : L^2(\mathcal{S}, \R{d}) \longrightarrow H.
\end{equation*}
We apply the Leray projector on equation \eqref{eq:systeme}. This enables us to get rid of the pressure terms. We obtain the following coupled system with unknown $(\bsym{u}, \bsym{\xi}),$ where $\bsym{\xi} = (\bsym{\xi}_i)_{i \in \N}$ is a sequence of unknown functions:
\begin{equation}
	\label{eq:systeme_projecteur_Leray}
	\left\{
	\begin{aligned}
		d_t \bsym{u} - \nu_1 P \Delta \bsym{u} &+ P\bigl((\bsym{u} \bcdot \nabla) \bsym{u}\bigr) dt \\
        &- P\Bigl(\nabla \bcdot \sum_{i=0}^{\infty} \bsym{\xi}_i \bsym{\xi}_i^{\intercal} \nabla\bsym{u}\Bigr) dt = -P\Bigl(\sum_{i=0}^{\infty} (\bsym{\xi}_i d\beta_i \bcdot \nabla) \bsym{u} \Bigr), \\[0.5em]
		\partial_t \bsym{\xi}_i - \nu_2 P\Delta^s \bsym{\xi}_i &+ P\bigl((\bsym{\xi}_i \bcdot \nabla)\bsym{u}\bigr) + P\bigl((\bsym{u} \bcdot \nabla) \bsym{\xi}_i\bigr) = 0, \qquad i \in \mathbb{N}.
	\end{aligned}
	\right.
\end{equation}
This system comes with the incompressibility conditions
\begin{equation}
\label{eq:hypothese_incompressible}
\nabla \bcdot \bsym{u}(t,\bsym{x}) = 0, \quad \nabla \bcdot \bsym{\xi}_i(t,\bsym{x})  = 0, \quad \forall i \in \N, t \in [0,T], \bsym{x} \in \mathcal{S},
\end{equation}
the boundary conditions 
\begin{equation}
\label{eq:boundary_condition}
\bsym{u}(t,\bsym{x}) = 0, \quad \bsym{\xi}_i(t,\bsym{x}) = 0, \quad \forall i \in \N, t \in [0,T], \bsym{x} \in \partial \mathcal{S},
\end{equation}
and the initial conditions
\begin{equation}
\label{eq:initial_condition}
\bsym{u}(0,\bsym{x}) = \bsym{u}_{0}(\bsym{x}), \quad \bsym{\xi}_i(0,\bsym{x}) = \bsym{\xi}_{i,0}(\bsym{x}), \quad \forall i \in \N,  \bsym{x} \in \mathcal{S}.
\end{equation}
Let us now write the system in its abstract form. From now on, since these constants play no role in our analysis, we set \(\nu_1=\nu_2=1\). We follow the presentation and notation introduced in \cite{lions_1969}. First, we define the set
\begin{equation*}
\mathcal{V} = \{\phi \in C^{\infty}_c(\mathcal{S})^d, \nabla \bcdot \phi = 0  \}.
\end{equation*} 
Let $H$ be the closure of $\mathcal{V}$ in $L^2(\mathcal{S}, \R{d})$ and $V$ the closure of $\mathcal{V}$ in $H^1(\mathcal{S}, \R{d}).$  The space H is endowed with the $L^2(\mathcal{S}, \R{d})$ scalar product: 
\begin{equation*}
\scalar[\big]{\bsym{u}}{\bsym{v}}{H} = \scalar[\big]{\bsym{u}}{\bsym{v}}{L^2(\mathcal{S}, \R{d})} \quad \text{and} \quad \normeH{\bsym{u}} = \norme{\bsym{u}}{L^2(\mathcal{S}, \R{d})}. 
\end{equation*}
$V$ is endowed with the $H^1_0(\mathcal{S}, \R{d})$ scalar product and norm:
\begin{equation*}
\scalar[\big]{\bsym{u}}{\bsym{v}}{V} = \scalar[\big]{\nabla\bsym{u}}{\nabla\bsym{v}}{L^2(\mathcal{S}, \R{d})} \quad \text{and} \quad \norme{\bsym{u}}{V} = \norme{\nabla\bsym{u}}{L^2(\mathcal{S}, \R{d})}.
\end{equation*}
We denote by $V'$ the dual space of $V.$ The space of Hilbert-Schmidt operators from the Hilbert space $K_1$ to the Hilbert space $K_2$ is denoted $\mathcal{L}_2(K_1,K_2)$ and its norm $\norme{.}{\mathcal{L}_2(K_1,K_2)}.$ \\

We now define operator $A : \domA{} = V \cap H^{2}(\mathcal{S}, \R{d}).$ It is an unbounded, densely defined, bijective operator such that
\begin{equation}
\label{eq:def_operator_A}
A = - P \Delta, \quad \text{and satisfies} \quad \forall \bsym{u}, \bsym{v} \in \domA{} \times V, \scalar[\big]{A\bsym{u}}{\bsym{v}}{H} = \scalar[\big]{\bsym{u}}{\bsym{v}}{V}.
\end{equation}
Since the embedding $V \hookrightarrow H$ is compact, it follows that $A^{-1}$ is compact on $H.$ It is also a symmetric operator. Therefore, there exists a complete orthonormal basis $(\bsym{e}_i)_{i\geq 0}$ for $H$ composed of eigenfunctions of $A,$ each associated to the increasing unbounded sequence of eigenvalues $(\lambda_i)_{i \geq 0}.$ We now define the fractional powers of $A.$ Consider $\alpha > 0,$ and take
\begin{equation}
\domA{\alpha} = \Bigl\{ \bsym{u} \in H ; \sum_{i=0}^{\infty} \lambda_i^{2\alpha} \scalar[\big]{\bsym{u}}{\bsym{e}_i}{H}^2 < \infty \Bigr\}.
\end{equation}
On $\domA{\alpha},$ we define operator $A^{\alpha}$ 
\begin{equation}
A^{\alpha}\bsym{u} = \sum_{i = 0}^{\infty} \lambda_i^{\alpha} \scalar[\big]{\bsym{u}}{\bsym{e}_i}{H}\bsym{e}_i.
\end{equation}
The domain $\domA{\alpha}$ can be equipped  with the Hilbert norm 
\begin{equation}
\norme{\bsym{u}}{2\alpha} = \normeH{A^{\alpha}\bsym{u}} = \Bigl(\sum_{i=0}^{\infty} \lambda_i^{2\alpha}\scalar[\big]{\bsym{u}}{\bsym{e}_i}{H}^2\Bigr)^{1/2}.
\end{equation}
It is well known that $\norme{.}{2\alpha}$ is equivalent to  the norm of the space $H^{2\alpha}(\mathcal{S},\R{d}).$ Operator $B : V\times V \longrightarrow V'$ and operator $b : V\times V \times V \longrightarrow \R{}$ are defined respectively by
\begin{equation}
\label{eq:def_operator_B}
B(\bsym{u},\bsym{v}) = P \big[(\bsym{u} \bcdot \nabla )\bsym{v}\big] \quad \text{and} \quad b(\bsym{u}, \bsym{v}, \bsym{w}) = \scalar[\big]{B(\bsym{u}, \bsym{v})}{\bsym{w}}{H}.
\end{equation}
Note that, thanks to the incompressibility conditions we have
\begin{equation}
\label{eq:b_identity}
b(\bsym{u}, \bsym{v}, \bsym{w}) = -b(\bsym{u}, \bsym{w}, \bsym{v}) \quad \text{and} \quad b(\bsym{u}, \bsym{v}, \bsym{v}) = 0 \quad \forall \bsym{u}, \bsym{v}, \bsym{w} \in V. 
\end{equation}
For any $\beta > \frac{d}{2}, \domA{\beta/2} \hookrightarrow L^{\infty}(\mathcal{S},\R{d}),$ and consider
\begin{equation*}
\gamma > \frac{d+2}{4}.
\end{equation*}
As such, both operators $A$ and $B$ can be extended to continuous operators:
\begin{equation*}
A : H \longrightarrow \domA{-\gamma} \quad B : H \times H \longrightarrow \domA{-\gamma}.
\end{equation*}
We have the useful upper bounds:
\begin{equation}
\label{eq:A-B_majoration}
\norme{A\bsym{v}}{\domA{-\gamma}} \leq \norme{\bsym{v}}{V} \quad \text{and} \quad \norme{B(\bsym{v}, \bsym{w})}{\domA{-\gamma}} \leq \normeH{\bsym{v}}\normeH{\bsym{w}}.
\end{equation}
Consider a sequence of $V-$valued functions $(\bsym{\xi}_i)_{i \in \N}$ defined on $[0,T] \times \mathcal{S}.$ We now define the stochastic operators $G[(\bsym{\xi}_i)_{i \in \N}](.) : V \longrightarrow \mathcal{L}_2\bigl(L^2(\mathcal{S}, \mathbb{R}^d), H\bigr)$ and the associated Itô correction $F[(\bsym{\xi}_i)_{i \in \N}] (.) : V \longrightarrow H.$ Consider $\phi = \sum_{i = 0}^{\infty}  \phi_i \bsym{e}_i \in H :$
\begin{equation}
\label{eq:def_operators_2}
\begin{aligned}    
G[(\bsym{\xi}_i)_{i \in \N}] (\bsym{u})\phi &= - P\Bigl(\sum_{i=0}^{\infty} (\phi_i\, \bsym{\xi}_i \bcdot \nabla) \bsym{u}\Bigr)  \\ 
F[(\bsym{\xi}_i)_{i \in \N}] (\bsym{u}) &= -\frac{1}{2}P  \Bigl( \nabla \bcdot\sum^{\infty}_{i=0} \bsym{\xi}_i \bsym{\xi}^{\intercal}_i \nabla \bsym{u}\Bigr).
\end{aligned}
\end{equation}
Henceforth, to simplify notations we drop the $i \in \mathbb{N}$ index and we denote by $\bsym{\xi}$ the sequence $(\bsym{\xi}_i)_{i \in \N}$. Once again both operators $\F{\bsym{\xi}}{\bsym{u}}$ and $\G{\bsym{\xi}}{\bsym{u}}$ can be extended to continuous operators
\begin{equation*}
\F{\bsym{\xi}}{\bsym{.}} : H \longrightarrow \mathcal{D}(A^{-\gamma}),
\qquad
\G{\bsym{\xi}}{\bsym{.}} : H \longrightarrow \mathcal{L}_2\bigl(L^2(\mathcal{S}, \mathbb{R}^d), \domA{-\gamma}\bigr).
\end{equation*}
Here we give an upper bound to $\norme{\F{\bsym{\xi}}{\bsym{\bsym{u}}}}{\domA{-\gamma}}.$ For $\bsym{u}, \bsym{\xi}  \in H\times H^\N,$ 
\begin{align*}
\norme{\F{\bsym{\xi}}{\bsym{\bsym{u}}}}{\domA{-\gamma}} &\leq \sum_{i=0}^{\infty} \norme{\bsym{\xi}_i\bsym{\xi}_i^{\intercal} \nabla  \bsym{u}}{\domA{-\gamma + \frac{1}{2}}} \\
& \leq \sum_{i=0}^{\infty} \norme{\bsym{\xi}_i\bsym{\xi}_i^{\intercal}\nabla\bsym{u}}{L^1}. 
\end{align*}
Thanks to Cauchy-Schwarz inequality we find an upper bound for operator $F:$
\begin{equation}
\label{eq:F_majoration}
\norme{\F{\bsym{\xi}}{\bsym{\bsym{u}}}}{\domA{-\gamma}}  \leq \norme{\bsym{u}}{V} \sum_{i=0}^{\infty} \norme{\bsym{\xi}_i}{L^4}^2 .
\end{equation}
We now compute $\norme{\G{\bsym{\xi}}{\bsym{u}}}{\mathcal{L}_2\bigl(L^2(\mathcal{S}, \mathbb{R}^d), \domA{-\gamma}\bigr)}^2$ depending on the quantities $\norme{\bsym{\xi}_i}{H}$ and $\norme{\bsym{u}}{H}$ for $\bsym{\xi} \in V^\N$ and $\bsym{u} \in V:$
\begin{align*}
\norme{\G{\bsym{\xi}}{\bsym{u}}}{\mathcal{L}_2\big(L^2(\mathcal{S}, \mathbb{R}^d), \domA{-\gamma}\big)}^2 &= \sum_{i=0}^{\infty} \norme{\G{\bsym{\xi}}{\bsym{u}}\bsym{e}_i}{\domA{-\gamma}}^2 \\
&= \sum_{i=0}^{\infty} \norme{B(\bsym{\xi}_i, \bsym{u})}{\domA{-\gamma}}^2.  
\end{align*}
This gives us the upper bound for operator $G$ thanks to \eqref{eq:A-B_majoration}: 
\begin{equation}
\label{eq:G_majoration}
\norme{\G{\bsym{\xi}}{\bsym{u}}}{\mathcal{L}_2\big(L^2(\mathcal{S}, \mathbb{R}^d), \domA{-\gamma}\big)}^2 \leq \sum_{i=0}^{\infty} \normeH{\bsym{\xi}_i}^2 \normeH{\bsym{u}}^2.
\end{equation}
Since $\G{\bsym{\xi}}{\bsym{u}}$ is Hilbert-Schmidt, the following stochastic term is well defined and must be understood in the sense of Itô integration:
\begin{equation}
\G{\bsym{\xi}}{\bsym{u}}d\bsym{W}_t =  - P\Bigl(\sum_{i=0}^{\infty} (d\beta_i\, \bsym{\xi}_i \bcdot \nabla) \bsym{u}\Bigr).  
\end{equation}
As such, the process $\bsym{W}_t = \sum_{i=0}^{\infty} \beta_i \bsym{e}_i $ defines a cylindrical Wiener process on $H,$ it takes values in $\domA{-\gamma}.$ We can now write the abstract formulation of the system for $s \geq 1$ on the stochastic basis $\bigl(\Omega, \mathcal{F}, \{\mathcal{F}_t\}_{t \in \mathbb{R}_+}, \mathbb{P}, (\beta_i)_{i \in \N}\bigr):$
\begin{equation}
	\label{eq:equation_operateurs}
	\left\{
	\begin{aligned}
&d_t \bsym{u} + \bigl( A\bsym{u} + B(\bsym{u},\bsym{u}) + \F{\bsym{\xi}}{\bsym{u}}\bigr) dt = \G{\bsym{\xi}}{\bsym{u}}d\bsym{W}_t, \\[0.5em]    
&\partial_t \bsym{\xi}_i +  A^s \bsym{\xi}_i + B(\bsym{\xi}_i,\bsym{u}) + B(\bsym{u},\bsym{\xi}_i) = 0, \qquad i \in \N. 
	\end{aligned}
	\right.
\end{equation}
{Had we replaced the noise-basis evolution equation} by a stationary equation $\partial_t \bsym{\xi}_i = 0, i \in \N,$ we would have recovered the LU system and we refer to \cite{Debussche_Hug_Memin_2023} for the study of the well-posedness of the equation. From now on, we will denote by $L^p$ the space $L^p(\mathcal{S},\R{d}),$ where $p \geq 1.$ Similarly, we will write $H^{\alpha}$ instead of $H^{\alpha}(\mathcal{S},\R{d}).$

\subsection{Solutions and main results}

We give the definitions of martingale and pathwise solutions. Let  $\alpha \in [0,1)$ and $q<\alpha$. We recall that $\gamma > \frac{d+2}{4},$ where $d$ stands for the space dimension.
\begin{definition}[Martingale Solution]
\label{def:martingale_solution}
We say that there exists a martingale solution of equation \eqref{eq:equation_operateurs} if there exists a stochastic basis $\bigl(\Omega, \mathcal{F}, \{\mathcal{F}_t \}_{t\in [0,T]}, \mathbb{P}, \bsym{W}_t \bigr),$ a pair of progressively measurable processes \(u\) and \(\xi=(\xi_i)_{i\in\mathbb N}\), where \(u\) is \(H\)-valued and each \(\xi_i\) is \(H\)-valued, such that, \(\mathbb P\)-a.s.,
\begin{align*}
	&\bsym{u}(.,\omega) \in L^2\bigl(0,T,V\bigr) \cap L^{\infty}\bigl(0,T,H\bigr) \cap C^0\bigl(0,T,\domA{-\gamma}\bigr),  \\
	&\bigl(\bsym{\xi}_i(.,\omega)\bigr)_{i \in \N} \in l^2\Bigl(\N, L^2(0,T,\domA{\frac{1+\alpha}{2}}) \cap L^{\infty}(0,T,\domA{\frac{\alpha}{2}}) \cap C^0\bigl(0,T,\domA{\frac{q}{2}}\bigr)\Bigl)  ,
\end{align*}
and such that $\mathbb{P}-a.s.$ the identity
\begin{equation}
\label{eq:formulation_variationnelle_operator}
\left\{
\begin{aligned}
\scalar[\big]{\bsym{u}(t) - \bsym{u}(0)}{\bsym{y}}{H} + \displaystyle\int_{0}^t \scalar[\big]{A\bsym{u}}{\bsym{y}}{H} + \scalar[\big]{B(\bsym{u},\bsym{u})}{\bsym{y}}{H} \\
+ \scalar[\big]{\F{\bsym{\xi}}{\bsym{u}}}{\bsym{y}}{H}  ds = \scalar[\Big]{\displaystyle\int_0^t \G{\bsym{\xi}}{\bsym{u}}d\bsym{W}_s}{\bsym{y}}{H}, \\[0.5em]
\scalar{\bsym{\xi}_i(t) - \bsym{\xi}_i(0)}{\bsym{z}}{H} + \displaystyle \int_{0}^t \scalar[\big]{A^s \bsym{\xi}_i}{\bsym{z}}{H} + \scalar[\big]{B(\bsym{\xi}_i,\bsym{u})}{\bsym{z}}{H} \\
\quad + \scalar[\big]{B(\bsym{u},\bsym{\xi}_i)}{\bsym{z}}{H} ds = 0, \qquad i \in \N.
\end{aligned}
\right.
\end{equation}
holds for all $t \in [0,T]$ and all $\bsym{y}, \bsym{z} \in \domA{\gamma}.$
\end{definition}

We define the second type of stochastic solutions. 

\begin{definition}[Pathwise solution]
\label{def:pathwise_solution}
Consider $\mathfrak{S} = \bigl(\Omega, \mathcal{F}, \{\mathcal{F}_t\}_{t \in [0,T]}, \mathbb{P}, \bsym{W}_t\bigr)$ a fixed stochastic basis. A pathwise solution of equation \eqref{eq:equation_operateurs} with respect to $\mathfrak{S}$ is a pair of progressively measurable stochastic processes $(\bsym{u},\bsym{\xi})$ such that $(\mathfrak{S}, \bsym{u}, \bsym{\xi})$ is a martingale solution of equation \eqref{eq:equation_operateurs}.
\end{definition}

{Here ``pathwise solution'' means a solution constructed on a prescribed stochastic basis and driven by the prescribed Wiener process. Pathwise uniqueness means that two such solutions on the same stochastic basis, with the same Wiener process and the same initial data, are indistinguishable.}

 The first result of this work is a well-posedness theorem in dimension $d=2:$

\begin{theorem}[Existence and uniqueness of solution, $d=2$]
	\label{thm:well_posedness_2d} The following statements hold:

	\begin{enumerate}[label=(\roman*)]
		\item \textbf{Existence:}
		Let $d=2$ and $s\geq1.$ For all $\alpha \in [\frac{1}{2},1[$ and any initial condition $\bsym{u}_0\in H$,  $\bsym{\xi}_{0} \in H^{\mathbb N}$, such that $ \sum_{i=0}^{\infty} \norme{\bsym{\xi}_{i,0}}{\alpha}^2 < +\infty,$ there exists a martingale solution to \eqref{eq:equation_operateurs}.
		
		\item \textbf{Pathwise existence and uniqueness:}
		{Let $d=2$ and $s>1$, and assume in addition that there exists $\alpha_*\in(1,s)$ such that
			$\sum_{i=0}^{\infty}\norme{\bsym{\xi}_{i,0}}{\alpha_*}^2<+\infty$. Consider a prescribed stochastic basis $\mathfrak{S}=\bigl(\Omega,\mathcal{F},\{\mathcal{F}_t\}_{t\in[0,T]},\mathbb{P},\bsym{W}_t\bigr)$. Then there exists a pathwise solution to \eqref{eq:equation_operateurs} on $\mathfrak{S}$, and pathwise uniqueness holds; in particular, any two solutions on the same stochastic basis with the same initial data and Wiener process are indistinguishable.}
	\end{enumerate}
\end{theorem}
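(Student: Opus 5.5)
We will prove (i) by a Galerkin scheme combined with stochastic compactness, and (ii) by a pathwise stability estimate followed by the Yamada--Watanabe theorem (in the Gyöngy--Krylov form).

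\emph{Galerkin system.} Let $P_n$ denote the projection onto $\mathrm{span}(\bsym{e}_0,\dots,\bsym{e}_n)$. We truncate the family of modes to $i\le n$ and consider
\begin{equation*}
d\bsym{u}^n+P_n\bigl(A\bsym{u}^n+B(\bsym{u}^n,\bsym{u}^n)+F[\bsym{\xi}^n](\bsym{u}^n)\bigr)dt=P_nG[\bsym{\xi}^n](\bsym{u}^n)\,d\bsym{W},
\end{equation*}
\begin{equation*}
\partial_t\bsym{\xi}^n_i+P_n\bigl(A^s\bsym{\xi}^n_i+B(\bsym{\xi}^n_i,\bsym{u}^n)+B(\bsym{u}^n,\bsym{\xi}^n_i)\bigr)=0 .
\end{equation*}
This is a locally Lipschitz finite-dimensional SDE, so it has a local solution. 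Global existence will follow from the a priori bounds below.

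\emph{A priori bounds.} The structural point is that $F$ is exactly the Itô correction of $G$. Hence in Itô's formula for $|\bsym{u}^n|_H^2$ the term $-2(F,\bsym{u}^n)$ cancels $\|G\|^2_{\mathcal L_2}=\sum_i|(\bsym{\xi}_i\cdot\nabla)\bsym{u}|^2$, up to the Galerkin projection error. To avoid that error I would define $F$ at the Galerkin level as $\tfrac12\sum_i P_nB(\bsym{\xi}_i,P_nB(\bsym{\xi}_i,\cdot))$; this is the Stratonovich-consistent choice. The stochastic integral is $(B(\bsym{\xi}_i,\bsym{u}),\bsym{u})=0$, so pathwise
\begin{equation*}
|\bsym{u}^n(t)|_H^2+2\int_0^t\|\bsym{u}^n\|_V^2\le|\bsym{u}_0|_H^2 ,
\end{equation*}
and $L^p$ moments are immediate. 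For the modes, test the $\bsym{\xi}_i$ equation with $A^{\alpha}\bsym{\xi}_i$. The transport term $b(\bsym{u},\bsym{\xi}_i,A^\alpha\bsym{\xi}_i)$ is a commutator, and the stretching term is $b(\bsym{\xi}_i,\bsym{u},A^\alpha\bsym{\xi}_i)$. In $d=2$, with $\bsym{u}$ only in $V$, I would bound both by $C\|\bsym{u}\|_V\|\bsym{\xi}_i\|_{\alpha+1}^{\theta}\|\bsym{\xi}_i\|_{\alpha}^{2-\theta}$ using Ladyzhenskaya/Sobolev inequalities and $\alpha\ge\tfrac12$. Young's inequality against $\|\bsym{\xi}_i\|^2_{\alpha+s}\ge c\|\bsym{\xi}_i\|_{\alpha+1}^2$ then gives
\begin{equation*}
\tfrac{d}{dt}\|\bsym{\xi}_i\|_\alpha^2+\|\bsym{\xi}_i\|_{\alpha+1}^2\le C\|\bsym{u}\|_V^{2}\|\bsym{\xi}_i\|_\alpha^2 .
\end{equation*}
We need the power of $\|\bsym{u}\|_V$ to be at most $2$, which is exactly where $s\ge1$ and $\alpha\ge1/2$ enter. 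Gronwall, with $\int_0^T\|\bsym{u}\|_V^2\le|\bsym{u}_0|^2$ deterministic, yields a bound in $L^\infty(\domA{\alpha/2})\cap L^2(\domA{(1+\alpha)/2})$ uniformly in $n$. Summing over $i$, the weight $e^{C|\bsym{u}_0|^2}$ does not depend on $i$. Consequently $\sum_i\|\bsym{\xi}_i\|_{L^4}^2$ is bounded in $L^\infty_t$, which controls $F$ through \eqref{eq:F_majoration} and $G$ through \eqref{eq:G_majoration}.

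\emph{Compactness and passage to the limit.} Time-regularity estimates in $\domA{-\gamma}$ follow from \eqref{eq:A-B_majoration}, \eqref{eq:F_majoration}, \eqref{eq:G_majoration} and the BDG inequality (a fractional Sobolev bound $W^{a,p}(0,T;\domA{-\gamma})$ for the stochastic part). The Aubin--Lions--Simon lemma then gives tightness of $\bsym{u}^n$ in $L^2(0,T;H)\cap C(0,T;\domA{-\gamma})$. For $\bsym{\xi}^n$, tightness holds in the weighted space $l^2(\N;L^2(0,T;\domA{(1+q)/2})\cap C(0,T;\domA{q/2}))$, where we use the uniform summability of the tails in $i$ for $q<\alpha$. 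Skorokhod (or Jakubowski) representation gives a.s. convergence on a new basis. To pass to the limit in the quasilinear term $\sum_i\bsym{\xi}_i\bsym{\xi}_i^\intercal\nabla\bsym{u}$, we combine strong convergence of $\bsym{\xi}$ in $l^2(L^2_tL^4_x)$ with weak convergence of $\nabla\bsym{u}$ in $L^2_{t,x}$, tested against $\bsym{y}\in\domA{\gamma}\subset W^{1,\infty}$. The martingale is identified by the usual quadratic-variation characterization, and the new Wiener process is recovered from it. I expect this identification of the limit of $F$ and $G$, which requires strong convergence of the infinite family, to be the main technical step of (i).

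\emph{Uniqueness.} Take two solutions $(\bsym{u}^j,\bsym{\xi}^j)$ on the same basis, and set $\bsym{w}=\bsym{u}^1-\bsym{u}^2$, $\bsym{\eta}_i=\bsym{\xi}^1_i-\bsym{\xi}^2_i$. We apply Itô's formula to $|\bsym{w}|_H^2+\sum_i\|\bsym{\eta}_i\|_{\beta}^2$ for a suitable $\beta\in[0,1]$, using a stopping time on which the solution norms are bounded. The new feature is the cross term coming from $F$ and $G$: it contains $\sum_i(\bsym{\eta}_i\cdot\nabla)\bsym{u}^2$, which requires $\|\bsym{\eta}_i\|$ controlled in $L^\infty$-type norms times $\|\bsym{u}^2\|_V$, together with terms $\bsym{\xi}\bsym{\eta}^\intercal\nabla \bsym{u}$ paired with $\nabla\bsym{w}$. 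Absorbing these into $\|\bsym{w}\|_V^2$ and $\sum_i\|\bsym{\eta}_i\|_{\beta+s}^2$ requires $\bsym{\xi}$ bounded in $l^2(L^\infty(\domA{\alpha_*/2}))$ with $\alpha_*>1$, so that $\bsym{\xi}_i\in W^{1,p}$ and $\sum_i\|\bsym{\xi}_i\|_{L^\infty}^2<\infty$. It also requires $s>1$, so that the gain in regularity of $\bsym{\eta}$ exceeds the one derivative lost in $B(\bsym{\eta}_i,\bsym{u})$. The higher regularity is propagated exactly as in the a priori bound, now with $\alpha_*<s$. The remaining cross terms are handled by Young's inequality, which yields $d\Phi\le C(t)\Phi\,dt+dM$ with $C(t)$ integrable. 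A stochastic Gronwall argument then gives $\Phi\equiv0$. I expect balancing the exponents in this coupled estimate, and the need to keep the Itô cancellation for the difference, to be the hardest step. Finally, Gyöngy--Krylov combined with pathwise uniqueness upgrades the martingale solution to a pathwise solution on the prescribed basis $\mathfrak{S}$.
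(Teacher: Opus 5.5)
\paragraph{The gap: the norm you use for the mode difference in the uniqueness step.}
You measure the mode difference $\eta_i$ in $\|\cdot\|_\beta$ with $\beta\in[0,1]$, and you put the extra regularity $\alpha_*>1$ only on the solutions. Write $v:=u^2$ for your second velocity. After the It\^o cancellations, the $F$- and $G$-contributions to $d|w|_H^2$ leave, up to numerical factors:
\begin{itemize}
\item the source term $\sum_i|(\eta_i\cdot\nabla)v|_{L^2}^2$, which has the unfavourable sign and must be estimated;
\item the cross terms $((\xi^1_i\cdot\nabla)w,(\eta_i\cdot\nabla)v)$ and $((\xi^2_i\cdot\nabla)v,(\eta_i\cdot\nabla)w)$.
\end{itemize}
Since $\nabla v$ and $\nabla w$ are only in $L^2$, each of these needs $\|\eta_i\|_{L^\infty}\|v\|_V$. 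For $\beta\le1$, $H^\beta$ does not embed into $L^\infty$ in two dimensions, so you must borrow from the dissipation: $\|\eta_i\|_{L^\infty}\lesssim\|\eta_i\|_\beta^{1-\theta}\|\eta_i\|_{\beta+s}^{\theta}$ with $\theta=(1+\epsilon-\beta)/s>0$. Young's inequality then leaves $C\|v\|_V^{2/(1-\theta)}\sum_i\|\eta_i\|_\beta^2$.

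With $u_0\in H$ you only know $v\in L^2(0,T;V)\cap L^\infty(0,T;H)$. The integral $\int_0^T\|v\|_V^{p}\,dt$ need not be finite for $p>2$, so the Gronwall argument does not close. A stopping time cannot repair this, because it only localizes quantities already known to be a.s.\ finite. Larger $s$ does not help either, since $\theta$ stays positive.

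\paragraph{How the paper avoids it, and two remarks on part (i).}
The paper measures the difference itself in $\|\cdot\|_{\alpha_*}$ with $\alpha_*\in(1,s)$, and uses the weight $\exp(-K\int_0^t(1+|v|_H^2+\|v\|_V^2)\,dr)$. Then $H^{\alpha_*}\hookrightarrow L^\infty$, together with $\sum_i\|\xi^j_i\|_{L^\infty}^2<\infty$, bounds all the terms above by $\epsilon\|w\|_V^2+C\|v\|_V^2\sum_i\|\eta_i\|_{\alpha_*}^2$, which the weight absorbs. The price is paid in the $\eta$-equation. The stretching term satisfies $b(\xi^2_i,w,A^{\alpha_*}\eta_i)\le\|\xi^2_i\|_{L^\infty}\|w\|_V\|\eta_i\|_{2\alpha_*}$ with $w$ only in $V$. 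Using $\|\eta_i\|_{2\alpha_*}\le\|\eta_i\|_{\alpha_*}^{(s-\alpha_*)/s}\|\eta_i\|_{s+\alpha_*}^{\alpha_*/s}$, it can be absorbed only if $\alpha_*<s$. That is precisely why $s>1$ is needed: the difference has to be measured above $H^1$.

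Your part (i) otherwise follows the paper's route. Your $\ell^2(\mathbb N;\cdot)$-weighted tightness, based on the deterministic tail bound $\sup_t\|\xi_i^n\|_\alpha^2\le c\|\xi_{i,0}\|_\alpha^2$, is a clean way to handle the sum over modes. Two corrections:
\begin{itemize}
\item Drop your modified Galerkin correction. $F$ is not exactly the It\^o correction of $G$: $2(F[\xi](u),u)-\|G[\xi](u)\|_{\mathcal L_2}^2=\sum_i|(I-P)(\xi_i\cdot\nabla)u|_{L^2}^2\ge0$, and $\|P_nG\|_{\mathcal L_2}\le\|G\|_{\mathcal L_2}$. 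So the paper's choice $P_nF[P_n\xi]$ already gives the energy inequality. Your $\frac12\sum_iP_nB(\xi_i,P_nB(\xi_i,\cdot))$ has the wrong sign. Even with the sign fixed, its limit $-\frac12\sum_iB(\xi_i,B(\xi_i,\cdot))$ differs from $F$ by a pressure term, so you would construct a solution of a different equation.
\item In the mode estimate, Sobolev embeddings with $\|u\|_V$ alone do not control the transport term. The paper bounds it by $\|u\|_\alpha\|\xi_i\|_\alpha^\alpha\|\xi_i\|_{1+\alpha}^{2-\alpha}$ and uses $\|u\|_\alpha^{2/\alpha}\le|u|_H^{2(1-\alpha)/\alpha}\|u\|_V^2$, which works for every $\alpha\in(0,1)$. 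The condition $\alpha\ge\frac12$ is only needed for $H^{1/2}\hookrightarrow L^4$ in $F$.
\end{itemize}
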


{The regularity thresholds appearing in Theorem \ref{thm:well_posedness_2d} have different analytical origins. The condition $s\geq1$ is sufficient for the a priori estimates and compactness argument used to construct martingale solutions. The strict condition $s>1$ in the uniqueness statement is used to choose an exponent $\alpha_*\in(1,s)$; the embedding $H^{\alpha_*}\hookrightarrow L^\infty$ in two dimensions is then crucial in the coupled stability estimate. We do not claim that the threshold $s>1$ is optimal; the endpoint $s=1$ is left open.}

\rev{Section \ref{sec:approximation_scheme} is devoted to the construction of martingale solutions in two dimensions by Galerkin approximation and compactness. Pathwise uniqueness and existence on a prescribed stochastic basis are proved in Section \ref{sec:pathwise_solution}. The three-dimensional case is treated in Section \ref{sec:the3d_case}.} In the next result, we take $\alpha =0.$

\bigskip

\begin{theorem}[Existence of solution, $d=3$]
	\label{thm:well_posedness_3d}   
	Let $d=3$ and $s > 3/2.$ For any initial conditions $\bsym{u}_0\in H$,  $\bsym{\xi}_{0} \in H^{\mathbb N}$, such that $\sum_{i=0}^{\infty} \normeH{\bsym{\xi}_{i,0}}^2 < +\infty,$ there exists a martingale solution to \eqref{eq:equation_operateurs}.
\end{theorem}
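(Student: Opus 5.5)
We will follow the same scheme as in two dimensions: Galerkin approximation, uniform a priori estimates, tightness, Skorokhod representation, and passage to the limit. The new feature is that the $\bsym{\xi}$-equation is only controlled at the $H$ level ($\alpha=0$). We truncate both the velocity and the family of modes: $\bsym{u}^n$ lives in $H_n=\mathrm{span}(\bsym{e}_0,\dots,\bsym{e}_n)$, and we keep finitely many modes $\bsym{\xi}_i^n\in H_n$, $i\le n$. Each mode solves the projected linear equation $\partial_t\bsym{\xi}_i^n+A^s\bsym{\xi}_i^n+P_nB(\bsym{\xi}_i^n,\bsym{u}^n)+P_nB(\bsym{u}^n,\bsym{\xi}_i^n)=0$, with initial data $P_n\bsym{\xi}_{i,0}$. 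This is a finite-dimensional SDE with locally Lipschitz coefficients. Global existence follows from the energy bounds below, applied with a stopping-time argument.

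\textbf{A priori estimates.} For $\bsym{u}^n$, Itô's formula applied to $\normeH{\bsym{u}^n}^2$ gives exact cancellation between the Itô correction $2(\F{\bsym{\xi}^n}{\bsym{u}^n},\bsym{u}^n)$ and the quadratic variation $\sum_i\normeH{P_nB(\bsym{\xi}_i^n,\bsym{u}^n)}^2$, up to a nonnegative remainder coming from $P_n$. Indeed $(F,\bsym{u})=\frac12\sum_i\norme{\bsym{\xi}_i^\intercal\nabla\bsym{u}}{L^2}^2$, and $\normeH{P_nB}\le\norme{\bsym{\xi}_i\cdot\nabla\bsym{u}}{L^2}$, so this remainder has a favorable sign. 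The trilinear term vanishes, and the martingale term $\sum_i b(\bsym{\xi}_i,\bsym{u},\bsym{u})$ vanishes as well. We therefore obtain, pathwise, $\sup_t\normeH{\bsym{u}^n}^2+2\int_0^T\norme{\bsym{u}^n}{V}^2\le\normeH{\bsym{u}_0}^2$.

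For the modes, testing with $\bsym{\xi}_i^n$ gives $\frac12\frac{d}{dt}\normeH{\bsym{\xi}_i^n}^2+\norme{\bsym{\xi}_i^n}{s}^2=-b(\bsym{\xi}_i^n,\bsym{u}^n,\bsym{\xi}_i^n)$. In $d=3$ we bound $|b|\le\norme{\bsym{\xi}_i}{L^4}^2\norme{\bsym{u}}{V}$. Since $H^{3/4}\hookrightarrow L^4$ and $s>3/2$, interpolation yields $\norme{\bsym{\xi}_i}{L^4}^2\le C\normeH{\bsym{\xi}_i}^{2-3/(2s)}\norme{\bsym{\xi}_i}{s}^{3/(2s)}$. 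Young's inequality then gives $\frac12\norme{\bsym{\xi}_i}{s}^2+C\norme{\bsym{u}}{V}^{p}\normeH{\bsym{\xi}_i}^2$ with $p=4s/(4s-3)<2$, precisely because $s>3/2$. This is where the threshold enters, and we expect it to be the key point. Summing over $i$ and applying Gronwall with $\int_0^T\norme{\bsym{u}^n}{V}^p<\infty$ (from the deterministic bound above) gives
\begin{equation*}
\sup_t\sum_i\normeH{\bsym{\xi}_i^n}^2+\sum_i\int_0^T\norme{\bsym{\xi}_i^n}{s}^2\le C\Bigl(T,\normeH{\bsym{u}_0}\Bigr)\sum_i\normeH{\bsym{\xi}_{i,0}}^2,
\end{equation*}
again pathwise. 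Consequently $\sum_i\norme{\bsym{\xi}_i^n}{L^4}^2\in L^2(0,T)$, and by \eqref{eq:F_majoration} $F$ is bounded in $L^1(0,T;\domA{-\gamma})$, in fact in $L^{r}$ for some $r>1$ after interpolation. By \eqref{eq:G_majoration}, $G$ is bounded in $L^\infty(0,T;\mathcal{L}_2)$.

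\textbf{Compactness and limit.} We obtain fractional time regularity: $W^{\theta,p}(0,T;\domA{-\gamma})$ for $\bsym{u}^n$ via the Burkholder inequality, and $W^{1,r}(0,T;\domA{-\gamma})$ for each $\bsym{\xi}_i^n$. Combined with the spatial bounds, the Flandoli--Gatarek compactness lemmas give tightness of the law of $\bsym{u}^n$ in $L^2(0,T;H)\cap C(0,T;\domA{-\gamma})$. For $\bsym{\xi}^n$ we work in the space $l^2(\N,L^2(0,T;\domA{\frac{s'}{2}}))$ with $3/2<s'<s$. To get tightness in this infinite product we need uniform tails: $\sum_{i\ge N}\int\norme{\bsym{\xi}_i^n}{s}^2$ is controlled by $\sum_{i\ge N}\normeH{\bsym{\xi}_{i,0}}^2\to0$, uniformly in $n$, by the mode-by-mode Gronwall estimate. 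Skorokhod's theorem then provides a.s. convergent copies.

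We pass to the limit as follows. The nonlinear terms $B(\bsym{u},\bsym{u})$, $B(\bsym{\xi}_i,\bsym{u})$ and $B(\bsym{u},\bsym{\xi}_i)$ converge by strong $L^2$ convergence combined with weak $V$ convergence. The quasilinear term $\sum_i(\bsym{\xi}_i\bsym{\xi}_i^\intercal\nabla\bsym{u},\nabla\bsym{y})$ converges because $\bsym{\xi}^n\to\bsym{\xi}$ strongly in $l^2(L^2(0,T;L^4))$, which holds since $s'>3/4$, while $\nabla\bsym{u}^n\rightharpoonup\nabla\bsym{u}$ weakly in $L^2$. Finally, the stochastic integral is identified via the standard martingale characterization: quadratic variation convergence uses $\sum_i\normeH{B(\bsym{\xi}_i,\bsym{u})}_{-\gamma}^2$ and the strong convergences. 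The main obstacle is this simultaneous compactness of the infinite family together with the passage to the limit in the product $\bsym{\xi}\bsym{\xi}^\intercal\nabla\bsym{u}$, which has only weak convergence in the gradient. Both hinge on the $s>3/2$ estimate, which gives strong $L^4$ convergence of the modes.
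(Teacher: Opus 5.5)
Your proposal is correct and follows the paper's route: Galerkin approximation, pathwise energy bounds, tightness, Skorokhod representation, passage to the limit. The differences are standard variations. You truncate the mode index, so the Galerkin system is genuinely finite-dimensional. You obtain tightness directly in $\ell^2(\N;L^2(0,T;\domA{s'/2}))$ from the uniform tails $\sum_{i\ge N}\normeH{\bsym{\xi}_{i,0}}^2$; this is a cleaner form of the paper's step using Tychonoff and then dominated convergence in $i$, and both rest on the per-mode Gronwall bound. You identify the stochastic integral through the martingale representation theorem, whereas the paper puts $\bsym{W}$ into the Skorokhod tuple and applies a convergence lemma for stochastic integrals.

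One correction concerns where $s>3/2$ is used. It is not needed for the energy estimate. At $s=3/2$ your exponent is $p=2$, and Gronwall still closes because $\int_0^T\norme{\bsym{u}_n}{V}^2\,dt\le\normeH{\bsym{u}_0}^2$, as the paper points out. The strict inequality enters exactly at your side remark that $F$ is bounded in $L^r(0,T;\domA{-\gamma})$ with $r>1$. Since $\sum_i\norme{\bsym{\xi}_i^n}{L^4}^2$ is bounded in $L^{4s/3}(0,T)$, one gets $r=4s/(2s+3)$. This is precisely the paper's condition $3p/(s(2-p))\le 2$, and $r>1$ if and only if $s>3/2$. This $W^{1,r}$ regularity of the $F$-term is what yields compactness of $\bsym{u}_n$ in $C^0(0,T;\domA{-\gamma})$.

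The same gain is what your limit in $\bsym{\xi}\bsym{\xi}^\intercal\nabla\bsym{u}$ actually requires. Strong convergence in $\ell^2(L^2(0,T;L^4))$, paired with $\nabla\bsym{u}_n$ converging only weakly in $L^2(0,T;L^2)$, does not control a trilinear time integral. The piece $F[\bsym{\xi}^n](\bsym{u}_n)-F[\bsym{\xi}](\bsym{u}_n)$ tested against $\bsym{y}$ is bounded by $\norme{\nabla\bsym{y}}{L^\infty}\int_0^t\norme{\bsym{u}_n}{V}\bigl(\sum_i(\norme{\bsym{\xi}_i^n}{L^4}+\norme{\bsym{\xi}_i}{L^4})^2\bigr)^{1/2}\bigl(\sum_i\norme{\bsym{\xi}_i^n-\bsym{\xi}_i}{L^4}^2\bigr)^{1/2}dr$. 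Hence the last factor must tend to zero in $L^{8s/(4s-3)}(0,T)$. This follows by interpolating its convergence in $L^2(0,T)$ with its bound in $L^{8s/3}(0,T)$, which works exactly when $s>3/2$. With that made explicit, the argument is complete.
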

The energy estimate for the noise modes remains valid at the endpoint \(s=3/2\). The strict condition \(s>3/2\) arises in the compactness argument for the velocity: it allows us to choose \(p>1\) such that

$$
\frac{3p}{s(2-p)}\leq 2.
$$

This condition is used to obtain the required time-space compactness. We do not address the endpoint \(s=3/2\).


\section{Construction of martingale solutions}\label{sec:approximation_scheme}

\bigskip 
We consider  $s=1$ for the construction of martingale solutions, the proof for $s>1$ is of course easier.  We define the projected operators and introduce a Galerkin scheme.

Recall that  $(\bsym{e}_i)_{i \in \N}$ denotes the sequence of eigenvectors of  $A.$ They form a Hilbertian basis of $H.$ Define by $P_n : \domA{-\gamma} \longrightarrow H_{n} := Span(\bsym{e}_{0}, ... \bsym{e}_{n})$ the canonical orthogonal projection. Recall that $(\lambda_i)_{i \in \N}$ is the increasing unbounded sequence of eigenvalues of  $A.$  We have 

\begin{equation*}
P_{n}\bsym{u} := \sum_{i=0}^{n} \big\langle\bsym{u},\bsym{e}_{i} \big\rangle_{\domA{-\gamma}\times \domA{\gamma}}\bsym{e}_i \qquad \forall \bsym{u} \in \domA{-\gamma}, \gamma \in \mathbb{R}_+.
\end{equation*}
$P_n$ satisfies: 
\begin{equation}
\label{eq:estimation_norme_proj}
\norme{P_{n}\bsym{u}}{\domA{-\gamma}} \leq \norme{\bsym{u}}{\domA{-\gamma}}, \gamma \in \mathbb{R}.
\end{equation}
By dominated convergence, we have: 
\begin{equation}
\label{eq:convergence_norme_proj}
\norme{P_{n}\bsym{u}-\bsym{u}}{\domA{\gamma}} \underset{n \rightarrow \infty}{\longrightarrow} 0, \quad \bsym{u} \in \domA{\gamma}, \gamma \in \mathbb{R}. 
\end{equation}
And also for $\alpha < \beta$ and $n \in \N:$
\begin{equation}
\label{eq:majoration_projecteur}
\begin{aligned}
\norme{P_{n}\bsym{u}}{\domA{\beta}} &\leq \lambda^{\beta-\alpha}_{n+1} \norme{P_{n}\bsym{u}}{\domA{\alpha}}, \\
\norme{(I-P_{n})\bsym{u}}{\domA{\alpha}} &\leq \lambda^{\alpha-\beta}_{n+1} \norme{(I-P_{n})\bsym{u}}{\domA{\beta}}.
\end{aligned}
\end{equation}
We also define the  projected operators. For a given sequence of functions $\bsym{\xi} \in H^\N:$
\begin{equation}
B^{n} := P_{n}B, \quad \F[n]{\bsym{\xi}}{.} := P_{n}\F{P_n\bsym{\xi}}{.}  \quad \text{and} \quad \G[n]{\bsym{\xi}}{.}  := P_{n}\G{P_n\bsym{\xi}}{.} .
\end{equation} 
Consider the projected system 
\begin{equation}
\label{eq:projected_system}
\left\{
	\begin{array}{l}
    d_t \bsym{u}_n + \bigl(A\bsym{u}_n + B^n(\bsym{u}_n,\bsym{u}_n) + \F[n]{\bsym{\xi}^n}{\bsym{u}_n}\bigr) dt = \G[n]{\bsym{\xi}^n}{\bsym{u}_n} d\bsym{W}_t, \\[0.5em]
    \partial_t \bsym{\xi}^n_i + A \bsym{\xi}^n_i + B^n(\bsym{\xi}^n_i,\bsym{u}_n) + B^n(\bsym{u}_n,\bsym{\xi}^n_i) = 0, \qquad i \in \N.
	\end{array}
	\right.
\end{equation}
\rev{Operators $A, B^n, \F[n]{\bsym{\xi}}{.}$ and $\G[n]{\bsym{\xi}}{.}$ are Lipschitz on $H_n$, since $H_n$ is finite-dimensional.} 
Hence, it can be shown easily that for fixed \(n\), the projected system is an ODE/SDE on the complete product space \(H_n\times\ell^2(\mathbb N;H_n)\) with locally Lipschitz coefficients. The standard Cauchy–Lipschitz theorem yields a unique local solution. The energy estimates below prevent blow-up and therefore extend the solution globally.
We denote the  solution by $(\bsym{u}_n,\bsym{\xi}^n)$ on $[0,T]$ for arbitrary $T>0$. From now on, we will fix such a $T >0.$


We now derive energy estimates for the couple of solutions $(\bsym{u}_{n},\bsym{\xi}^n)$ for $n \in \N.$ We prove below that for the fluid velocity, there exists a positive constant $C_1$ such that:  
\begin{equation}
\label{eq:estimee_energie_u}
\int_{0}^{T}  \norme{\bsym{u}_n}{V}^2dt \leq C_1, \hspace{10pt} \sup_{0 \leq t \leq T}\normeH{\bsym{u}_n}^2 \leq C_1, \quad a.s.
\end{equation}
Concerning the noise basis equation, we prove that for all $\alpha \in (0,1),$ there exists a positive constant $C_2(\alpha)$ such that:
\begin{equation}
\label{eq:estimee_energie_xi}
\int_0^T \sum_{i=0}^{\infty} \norme{\bsym{\xi}^n_i}{1+\alpha}^{2} dt \leq C_2(\alpha), \hspace{10pt} \sum_{i=0}^{\infty} \sup_{0 \leq t \leq T} \norme{\bsym{\xi}_i^n}{\alpha}^2 \leq C_2(\alpha), \quad a.s.
\end{equation}
\begin{proof}
{Applying It\^o's formula to $\normeH{\bsym{u}_n}^2$, using $b(\bsym{u}_n,\bsym{u}_n,\bsym{u}_n)=0$, and integrating by parts in the $F$ term, the It\^o correction cancels the corresponding drift contribution. Moreover the stochastic integral vanishes by incompressibility. Hence}
\begin{equation}
\label{eq:estimee_energie_intermediaire_u}
\normeH{\bsym{u}_n(t)}^2 + \int_0^t \norme{\bsym{u}_n}{V}^{2} ds \leq \normeH{\bsym{u}_n(0)}^2,
\end{equation}
{which gives \eqref{eq:estimee_energie_u}. For the noise modes, testing the second Galerkin equation against $A^\alpha\bsym{\xi}_i^n$ gives}
\begin{equation}
\label{eq:estimee_energie_intermediaire_xi}
\frac12\partial_t\norme{\bsym{\xi}_i^n}{\alpha}^2+\norme{\bsym{\xi}_i^n}{1+\alpha}^2
=-b(\bsym{u}_n,\bsym{\xi}_i^n,A^\alpha\bsym{\xi}_i^n)-b(\bsym{\xi}_i^n,\bsym{u}_n,A^\alpha\bsym{\xi}_i^n).
\end{equation}
{The two-dimensional Sobolev embeddings and interpolation yield}
\begin{equation}
\label{eq:inequality_xi_intermediaire_2d}
\frac12\partial_t\norme{\bsym{\xi}_i^n}{\alpha}^{2}+\norme{\bsym{\xi}_i^n}{1+\alpha}^{2}
\leq \norme{\bsym{\xi}_i^n}{\alpha}\norme{\bsym{u}_n}{V}\norme{\bsym{\xi}_i^n}{1+\alpha}
+\norme{\bsym{u}_n}{\alpha}\norme{\bsym{\xi}_i^n}{\alpha}^{\alpha}\norme{\bsym{\xi}_i^n}{1+\alpha}^{2-\alpha}.
\end{equation}
{Young's inequality and $\norme{\bsym{u}_n}{\alpha}^{2/\alpha}\leq C\normeH{\bsym{u}_n}^{2(1-\alpha)/\alpha}\norme{\bsym{u}_n}{V}^2$ imply
\[
\partial_t\norme{\bsym{\xi}_i^n}{\alpha}^2
\leq C\bigl(1+\normeH{\bsym{u}_n}^{2(1-\alpha)/\alpha}\bigr)\norme{\bsym{u}_n}{V}^2\norme{\bsym{\xi}_i^n}{\alpha}^2.
\]
By \eqref{eq:estimee_energie_intermediaire_u}, we have:
$$
\int_0^T \bigl(1+\normeH{\bsym{u}_n}^{2(1-\alpha)/\alpha}\bigr)\norme{\bsym{u}_n}{V}^2 ds \le c(\alpha, |u_0|_H)
$$
then Gronwall's lemma and summation over $i$ give the $L^\infty(0,T;H^\alpha)$ bound in \eqref{eq:estimee_energie_xi}. 
More precisely,  we have
\begin{equation}
	\label{truc}
\sup_{0 \leq t \leq T} \norme{\bsym{\xi}_i^n}{\alpha}^2  \le c(\alpha, |u_0|_H,T) \norme{\bsym{\xi}_{i,0}^n}{\alpha}^2
\end{equation}
Integrating \eqref{eq:inequality_xi_intermediaire_2d} and using Young's inequality once more gives the $L^2(0,T;H^{1+\alpha})$ bound.}
\end{proof}


Let us now use the energy estimates to prove the tightness of both $\bigl(\mathcal{L}(\bsym{u}_{n})\bigr)_{n \in \N}$ and every sequence $\bigl(\mathcal{L}(\bsym{\xi}_{i}^n)\bigr)_{n \in \N}$ for all $i \in \N$ in specific spaces. For all $n \in \N,$ 
\begin{equation}
\left\{
	\begin{aligned}
&\bsym{u}_{n}(t) = \bsym{u}_{n}(0) - \displaystyle\int_{0}^t A\bsym{u}_n + B^n(\bsym{u}_n,\bsym{u}_n) + \F[n]{\bsym{\xi}^n}{\bsym{u}_n} ds + \displaystyle\int_0^t \G[n]{\bsym{\xi}^n}{\bsym{u}_n}d\bsym{W}_s, \\[0.5em]
&\bsym{\xi}^n_i(t) = \bsym{\xi}^n_i(0) - \displaystyle \int_{0}^t A \bsym{\xi}^n_i + B^n(\bsym{\xi}^n_i,\bsym{u}_n) + B^n(\bsym{u}_n,\bsym{\xi}^n_i)ds, \qquad i \in \N.
	\end{aligned}
	\right.
\end{equation}
\begin{lemme}
\label{lemme:tightness_2d}
The following statements hold:
\begin{enumerate}[label=(\roman*)]
    \item The sequence $\bigl(\mathcal{L}(\bsym{u}_{n})\bigr)_{n \in \N}$ is tight in $L^{2}(0,T,H)$ and $C^0\bigl(0,T,\domA{-\beta}\bigr)$.

    \item The sequence $\bigl(\mathcal{L}(\bsym{\xi}_i^n)_{n \in \N}\bigr)_{i \in \N}$ is tight in $\bigl(L^{2}(0,T,H)\bigr)^\N$ and $\bigl(C^0\big(0,T,\domA{\frac{q}{2}}\big)\bigr)^\N$, $q \in (0,\alpha).$

%
\end{enumerate}
\end{lemme}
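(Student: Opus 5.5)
We follow the Flandoli--Gatarek compactness strategy: combine the almost sure energy bounds \eqref{eq:estimee_energie_u}--\eqref{eq:estimee_energie_xi} with a fractional time-regularity estimate, and then invoke the compact embeddings
\[
W^{\theta,p}(0,T;\domA{-\gamma})\cap L^2(0,T;V)\hookrightarrow\hookrightarrow L^2(0,T;H),\qquad W^{\theta,p}(0,T;\domA{-\gamma})\hookrightarrow\hookrightarrow C^0\bigl(0,T;\domA{-\beta}\bigr)
\]
for $\theta p>1$ and $\beta>\gamma$. For $\bsym\xi$ the analogous embeddings apply with the stronger spaces $L^2(0,T;\domA{\frac{1+\alpha}{2}})\cap L^\infty(0,T;\domA{\frac{\alpha}2})$. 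For $\bsym{u}_n$ the sets $\{\|\bsym u\|_{L^2(V)}+\|\bsym u\|_{W^{\theta,p}(\domA{-\gamma})}\le R\}$ are compact. Chebyshev's inequality then reduces tightness to a uniform bound on $\mathbb E\|\bsym u_n\|_{W^{\theta,p}(0,T;\domA{-\gamma})}^p$.

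\textbf{Step 1: time regularity of $\bsym u_n$.} Write $\bsym u_n(t)=\bsym u_n(0)+J^1_n(t)+J^2_n(t)$, where $J^1_n$ is the drift integral and $J^2_n$ the stochastic integral. By \eqref{eq:A-B_majoration}, \eqref{eq:F_majoration} and \eqref{eq:estimation_norme_proj},
\[
\|A\bsym u_n+B^n(\bsym u_n,\bsym u_n)+F^n\|_{\domA{-\gamma}}\le \|\bsym u_n\|_V\Bigl(1+\sum_i\|\bsym\xi^n_i\|_{L^4}^2\Bigr)+|\bsym u_n|_H^2 .
\]
In 2D, $H^{\alpha}\hookrightarrow L^4$ for $\alpha\ge\frac12$, so $\sum_i\|\bsym\xi^n_i\|_{L^4}^2\le C\sum_i\sup_t\|\bsym\xi^n_i\|_\alpha^2\le C_2(\alpha)$. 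Hence the drift integrand is bounded in $L^2(0,T;\domA{-\gamma})$ almost surely, uniformly in $n$, and $J^1_n$ is bounded in $W^{1,2}(0,T;\domA{-\gamma})$. For $J^2_n$ we apply the standard Flandoli--Gatarek lemma: $\mathbb E\|\int_0^\cdot\Phi\,dW\|^p_{W^{\theta,p}(\domA{-\gamma})}\le C\,\mathbb E\int_0^T\|\Phi\|^p_{\mathcal L_2}dt$ for $\theta<\frac12$. By \eqref{eq:G_majoration}, $\|G^n\|_{\mathcal L_2}^2\le \sum_i|\bsym\xi^n_i|_H^2\,|\bsym u_n|_H^2$, which is almost surely uniformly bounded. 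So for any $p$ we may take $\theta\in(\frac1p,\frac12)$. Since $W^{1,2}\hookrightarrow W^{\theta,p}$ for suitable parameters, e.g. $p=4$ and $\theta\in(\frac14,\frac12)$, item (i) follows, both in $L^2(0,T;H)$ and in $C^0(0,T;\domA{-\beta})$ for $\beta>\gamma$.

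\textbf{Step 2: the noise modes.} Each $\bsym\xi^n_i$ solves a random PDE with no stochastic integral. We estimate
\[
\|\partial_t\bsym\xi^n_i\|_{\domA{-\gamma}}\le \|\bsym\xi^n_i\|_V+2|\bsym\xi^n_i|_H|\bsym u_n|_H ,
\]
so $\partial_t\bsym\xi^n_i$ is bounded in $L^2(0,T;\domA{-\gamma})$ almost surely. Together with the $L^2(0,T;\domA{\frac{1+\alpha}2})$ and $L^\infty(0,T;\domA{\frac\alpha2})$ bounds, Aubin--Lions and Simon's lemma give compactness in $L^2(0,T;H)$ and in $C^0(0,T;\domA{\frac q2})$ for $q<\alpha$: interpolate between $L^\infty(\domA{\frac\alpha2})$ boundedness and equicontinuity in $\domA{-\gamma}$, then apply Arzel\`a--Ascoli. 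Tightness on the countable product is then immediate, since a product of compact sets $\prod_i K_i^{\varepsilon 2^{-i}}$ is compact by Tychonoff in the product topology.

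\textbf{Main obstacle.} The main obstacle is Step 1 with the coupled coefficients. The bound on $F^n$ requires a uniform $L^4$ control on $\sum_i|\bsym\xi_i|^2$, which is exactly why $\alpha\ge\frac12$ is needed. We must also check that the $\mathcal L_2$ bound for $G^n$ holds uniformly in $n$ in spite of the projections $P_n\bsym\xi$. If the almost sure bounds were unavailable, we would instead use moment bounds and the BDG inequality; here the pathwise estimates make this step straightforward.
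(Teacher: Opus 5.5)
Your proposal is correct and follows essentially the same route as the paper. It uses almost-sure $W^{1,2}(0,T;\mathcal{D}(A^{-\gamma}))$ bounds on the drift, with $H^{1/2}\hookrightarrow L^4$ and the $\ell^2$ bound on the modes controlling the $F$ term, together with the Flandoli--Gatarek fractional estimate for the stochastic convolution and the compact embeddings into $L^2(0,T;H)$ and $C^0(0,T;\mathcal{D}(A^{-\beta}))$; for the modes it uses a time-derivative bound plus interpolation with the energy estimate, then Tychonoff. The only cosmetic difference is that you use a single $W^{\theta,4}$ estimate, $\theta\in(\frac14,\frac12)$, for both compactness statements on $\bsym{u}_n$, whereas the paper uses $W^{\delta,2}$ for the $L^2(0,T;H)$ part and a separate $W^{a,p}$, $ap>1$, estimate for the $C^0$ part.
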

\begin{proof}
\label{proof:tightness_2d}
{We only record the estimates needed for compactness. By \eqref{eq:estimee_energie_u}, $(\bsym{u}_n)_n$ is bounded in probability in $L^2(0,T,V)$. Writing the first Galerkin equation as $\bsym{u}_n=J_1^n+\cdots+J_5^n$, estimates \eqref{eq:A-B_majoration}, \eqref{eq:F_majoration}, $H^{1/2}\hookrightarrow L^4$, and \eqref{eq:estimee_energie_xi} give, for every $\gamma>1$,}
\[
\sup_n\norme{J_1^n+J_2^n+J_3^n+J_4^n}{W^{1,2}(0,T,\domA{-\gamma})}<\infty\quad a.s.
\]
{For every $\delta<1/2$, the standard estimate for stochastic convolutions \cite[Lemma~2.1]{FlandoliGatarek1995} yields}
\[
\sup_n\mathbb E\norme{J_5^n}{W^{\delta,2}(0,T,\domA{-\gamma})}^2<\infty.
\]
{The compact embedding $L^2(0,T,V)\cap W^{\delta,2}(0,T,\domA{-\gamma})\Subset L^2(0,T,H)$ proves the first part of (i). For each fixed $i$, the second equation and the same operator estimates give}
\begin{equation}
\label{eq:tightness_xi}
\sup_n\norme{\bsym{\xi}_i^n}{W^{1,2}(0,T,\domA{-\gamma})}<\infty\quad a.s.,
\end{equation}
{while \eqref{eq:estimee_energie_xi} gives boundedness in $L^2(0,T,\domA{(1+\alpha)/2})$. The same compactness criterion proves tightness for each mode in $L^2(0,T,H)$; Tychonoff's theorem then gives the first part of  (ii). Finally, the compact embedding
\[
W^{a,p}(0,T,\domA{-\gamma})\Subset C^0(0,T,\domA{-\beta}),\qquad ap>1,\ \beta>\gamma,
\]
together with the preceding deterministic bounds and the corresponding $W^{a,p}$ estimate for the stochastic convolution, concludes (i). Combining \eqref{eq:tightness_xi} with \eqref{eq:estimee_energie_xi} and interpolation gives (ii), again followed by Tychonoff's theorem.}
\end{proof}


\bigskip

We now  prove the existence of a martingale solution by passing to the limit in the Galerkin approximations. 

Consider $\beta >  1$ The Wiener process verifies $\bsym{W} \in C^0\bigl(0,T,\domA{-\beta}\bigr).$ Define $\bsym{W}^{n} = \bsym{W}.$ We recall that 
$ \bsym{\xi} = (\bsym{\xi}_i)_{i \in \N}$ and $\bsym{\xi}^n = (\bsym{\xi}_i^n)_{i \in \N}.$ \rev{For all $q \in (0,\alpha)$, the family of laws of the triples $(\bsym{u}_n, \bsym{\xi}^n, \bsym{W}^n)$ is tight in} 
\begin{equation*}
\bigl(L^2(0,T,H) \cap C^0\bigl(0,T,\domA{-\beta}\bigr)\bigr) \times \bigl(L^2(0,T,H) \cap C^0\big(0,T,\domA{\frac{q}{2}}\big)\bigr)^\N \times C^0\bigl(0,T,\domA{-\beta}\bigr).
\end{equation*}
\rev{By the Skorokhod representation theorem, there exist a sequence of stochastic bases} $\bigl(\bar{\Omega}, \bar{\mathcal{F}}, \bigl\{\bar{\mathcal{F}}_t\bigr\}_{t \geq 0}, \bar{\mathbb{P}}, \bar{\bsym{W}}^{n}\bigr),$ a sequence of random variables $\bigl(\bar{\bsym{u}}_n, \bar{\bsym{\xi}}^n, \bar{\bsym{W}}^n\bigr)$ taking values in the above space  and a limit triple $\bigl(\bar{\bsym{u}}, \bar{\bsym{\xi}}^{\infty}, \bar{\bsym{W}}\bigr)$ such that $\bigl(\bar{\bsym{u}}_n, \bar{\bsym{\xi}}^n, \bar{\bsym{W}}^n \bigr)$ has the same law as $\bigl(\bsym{u}_n, \bsym{\xi}^n, \bsym{W}^n\bigr).$ All these sequences converge up to a subsequence
\begin{equation}
\label{eq:convergence}
\begin{aligned}
\bar{\bsym{u}}_n &\longrightarrow \bar{\bsym{u}} \quad \text{in} \quad L^2(0,T,H) \times C^0\bigl(0,T,\domA{-\beta}\bigr)  \quad \bar{\mathbb{P}}-a.s. \\
\bar{\bsym{\xi}}^n &\longrightarrow \bar{\bsym{\xi}}^{\infty} \quad \text{in} \quad \bigl(L^2(0,T,H) \times C^0\big(0,T,\domA{\frac{q}{2}}\big)\bigr)^\N \quad \bar{\mathbb{P}}-a.s. \\
\bar{\bsym{W}}^n &\longrightarrow \bar{\bsym{W}} \quad \text{in} \quad C^0\bigl(0,T,\domA{-\beta}\bigr) \quad \bar{\mathbb{P}}-a.s.
\end{aligned}
\end{equation} 
Arguing as in \cite{Bensoussan1995}, especially to treat the stochastic integral, we know that $(\bar{\bsym{u}}_n, \bar{\bsym{\xi}}^n)$ verifies the following equation, for all $n \in \N,$
\begin{equation}
\label{eq:equation_bar}
\left\{
	\begin{aligned}
&\bar{\bsym{u}}_{n}(t) = \bar{\bsym{u}}_{n}(0) - \displaystyle\int_{0}^t A\bar{\bsym{u}}_n + B^n(\bar{\bsym{u}}_n,\bar{\bsym{u}}_n) + \F[n]{\bar{\bsym{\xi}}^n}{\bar{\bsym{u}}_n} ds + \displaystyle\int_0^t \G[n]{\bar{\bsym{\xi}}^n}{\bar{\bsym{u}}_n}d\bar{\bsym{W}}^n_s, \\[0.5em]
&\bar{\bsym{\xi}}^n_i(t) = \bar{\bsym{\xi}}^n_i(0) - \displaystyle \int_{0}^t A \bar{\bsym{\xi}}^n_i + B^n(\bar{\bsym{\xi}}^n_i,\bar{\bsym{u}}_n) + B^n(\bar{\bsym{u}}_n,\bar{\bsym{\xi}}^n_i) ds, \qquad i \in \N.
	\end{aligned}
	\right.
\end{equation}
Also the couple $(\bar{\bsym{u}}_n, \bar{\bsym{\xi}}^n)$ verifies the same energy estimates as $(\bsym{u}_n, \bsym{\xi}^n)$. For $\bar{\bsym{u}}_n,$ we have for all $p >1:$  
\begin{equation}
\label{eq:estimation_u_bar}
\int_{0}^{T}  \norme{\bar{\bsym{u}}_n}{V}^2dt \leq C_1, \hspace{10pt} \sup_{0 \leq t \leq T}\normeH{\bar{\bsym{u}}_n}^2 \leq C_1, \quad a.s.
\end{equation}
Fix $i \in \N.$ We have the estimate on $\bar{\bsym{\xi}}_i^n$ for all $\alpha \in (0,1):$
\begin{equation}
\label{eq:estimation_xi_bar}
\int_0^T \sum_{i=0}^{\infty} \norme{\bar{\bsym{\xi}}^n_i}{1+\alpha}^{2} dt \leq C_2(\alpha), \hspace{10pt} \sum_{i=0}^{\infty} \sup_{0 \leq t \leq T} \norme{\bar{\bsym{\xi}}_i^n}{\alpha}^2 \leq C_2(\alpha), \quad a.s.
\end{equation}
By weak lower semicontinuity of the norm, together with Fatou's lemma with respect to the mode index $i$, the uniform \(L^\infty(0,T;D(A^{\alpha/2}))\) and \(L^2(0,T;D(A^{(1+\alpha)/2}))\) bounds on the Galerkin solutions pass to the limit as \(n\to\infty\).
Therefore, $\forall \alpha \in (0,1),$  $p > 1,$
\begin{equation}
	\label{eq:estimation_limite}
	\begin{aligned}
		\int_{0}^{T}  \norme{\bar{\bsym{u}}}{V}^2dt \leq C_1, &\hspace{10pt} \sup_{0 \leq t \leq T}\normeH{\bar{\bsym{u}}}^p \leq C_1, \quad a.s. \\
		\int_0^T \sum_{i=0}^{\infty} \norme{\bar{\bsym{\xi}}_i^{\infty}}{1+\alpha}^{2} dt \leq C_2(\alpha), &\hspace{10pt} \sum_{i=0}^{\infty} \sup_{0 \leq t \leq T} \norme{\bar{\bsym{\xi}}_i^{\infty}}{\alpha}^2 \leq C_2(\alpha), \quad a.s.
	\end{aligned}
\end{equation}
and, thanks to \eqref{eq:convergence},  $\bar{\mathbb{P}}-a.s.$
\begin{align*}
	&\bar{\bsym{u}}(.,\omega) \in L^2\bigl(0,T,V\bigr) \cap L^{\infty}\bigl(0,T,H\bigr) \cap C^0\bigl(0,T,\domA{-\gamma}\bigr),  \\
	&\bigl(\bar{\bsym{\xi}}_i^{\infty}(.,\omega)\bigr)_{i \in \N} \in l^2\Bigl(\N, L^2\big(0,T,\domA{\frac{1+\alpha}{2}}\big) \cap L^{\infty}\big(0,T,\domA{\frac{\alpha}{2}}\big) \cap C^0\bigl(0,T,\domA{\frac{q}{2}}\bigr)\Bigl).
\end{align*}
We have a candidate solution $\big(\bar{\bsym{u}}, \bar{\bsym{\xi}}^{\infty}\big)$. We prove here that it is a solution, that is we pass to the limit in \eqref{eq:equation_bar}. Consider the variational problem associated. For all $n \in \N, 0\leq t \leq T, \bsym{y},\bsym{z} \in \domA{\gamma}:$
\begin{equation}
\label{eq:pb_variationel}
\left\{
	\begin{aligned}
\scalar{\bar{\bsym{u}}_{n}(t) - \bar{\bsym{u}}_{n}(0)}{\bsym{y}}{H} + \displaystyle\int_{0}^t \scalar{A\bar{\bsym{u}}_n}{\bsym{y}}{H} + \scalar{B^n(\bar{\bsym{u}}_n,\bar{\bsym{u}}_n)}{\bsym{y}}{H} \\
+ \scalar{\F[n]{\bar{\bsym{\xi}}^n}{\bar{\bsym{u}}_n}}{\bsym{y}}{H} ds = \scalar[\bigg]{\displaystyle\int_0^t \G[n]{\bar{\bsym{\xi}}^n}{\bar{\bsym{u}}_n}d\bar{\bsym{W}}^n_s}{\bsym{y}}{H}, \\[0.5em] 
\scalar{\bar{\bsym{\xi}}^n_i(t) - \bar{\bsym{\xi}}^n_i(0)}{\bsym{z}}{H} + \displaystyle \int_{0}^t \scalar{A \bar{\bsym{\xi}}^n_i}{\bsym{z}}{H} + \scalar{B^n(\bar{\bsym{\xi}}^n_i,\bar{\bsym{u}}_n)}{\bsym{z}}{H} \\
+ \scalar{B^n(\bar{\bsym{u}}_n,\bar{\bsym{\xi}}^n_i)}{\bsym{z}}{H} ds = 0, \qquad i \in \N.
	\end{aligned}
	\right.
\end{equation}
We fix $t \in [0,T]$ and pass to the limit in \eqref{eq:pb_variationel}. Let us rewrite the system for the sake of simplicity:
\begin{equation}
\left\{
\begin{aligned}
&J_{1}^n + J_{2}^n + J_{3}^n + J_{4}^n = J_{5}^n, \\[0.5em]
&K_{1}^n + K_{2}^n + K_{3}^n + K_{4}^n = 0.
\end{aligned}
\right.
\end{equation}

\begin{lemme}
\label{lemme:convergence}
Consider the stochastic basis $\bigl(\bar{\Omega}, \bar{\mathcal{F}}, \bigl\{\bar{\mathcal{F}}_t\bigr\}_{t \geq 0}, \bar{\mathbb{P}}, \bar{\bsym{W}}^{n}\bigr).$ \rev{Up to a subsequence, the following almost sure convergences hold:} 

\begin{enumerate}[label=(\roman*)]
    \item $J_i^n \longrightarrow J_i \quad \bar{\mathbb{P}}-a.s. \quad i \in \{1,..,5\}.$
    \item $K_i^n \longrightarrow K_i \quad \bar{\mathbb{P}}-a.s. \quad i \in \{1,..,4\}.$
\end{enumerate}
\end{lemme}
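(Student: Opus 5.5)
We argue term by term, for fixed $t\in[0,T]$ and fixed test functions $\bsym{y},\bsym{z}\in\domA{\gamma}$, on the full-measure set where the convergences \eqref{eq:convergence} hold and the uniform bounds \eqref{eq:estimation_u_bar}--\eqref{eq:estimation_xi_bar} are satisfied.

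\textbf{Linear and initial terms.} For $J_1^n$ and $K_1^n$ we use the convergence in $C^0(0,T,\domA{-\beta})$ (resp. $C^0(0,T,\domA{q/2})$), which gives pointwise-in-time convergence of $\bar{\bsym{u}}_n(t)$ and $\bar{\bsym{\xi}}^n_i(t)$ tested against $\bsym{y},\bsym{z}\in\domA{\gamma}$, choosing $\gamma\ge\beta$. For $J_2^n$ and $K_2^n$ we write
\[
\scalar{A\bar{\bsym{u}}_n}{\bsym{y}}{H}=\scalar{\bar{\bsym{u}}_n}{A\bsym{y}}{H},
\]
and similarly for $K_2^n$. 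Convergence in $L^2(0,T,H)$ then suffices, since $A\bsym{y}\in H$ once $\gamma\ge1$.

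\textbf{Nonlinear transport terms.} For $J_3^n$, $K_3^n$ and $K_4^n$ we first remove $P_n$ by writing $\scalar{B^n(\bsym{a},\bsym{b})}{\bsym{y}}{H}=b(\bsym{a},\bsym{b},P_n\bsym{y})$. We then split the difference into a term containing $(P_n-I)\bsym{y}$, which tends to $0$ by \eqref{eq:convergence_norme_proj} together with the bound \eqref{eq:A-B_majoration} and the uniform $L^2(0,T,H)$ bounds, and a term $b(\bar{\bsym{u}}_n,\bar{\bsym{u}}_n,\bsym{y})-b(\bar{\bsym{u}},\bar{\bsym{u}},\bsym{y})$. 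Using \eqref{eq:b_identity} we rewrite the latter as $-b(\bar{\bsym{u}}_n,\bsym{y},\bar{\bsym{u}}_n)+b(\bar{\bsym{u}},\bsym{y},\bar{\bsym{u}})$, which is bilinear in $L^2$ with $\nabla\bsym{y}\in L^\infty$. It therefore converges thanks to the strong convergence in $L^2(0,T,H)$. The same argument handles the mixed terms in $(\bar{\bsym{u}}_n,\bar{\bsym{\xi}}^n_i)$.

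\textbf{Itô correction term.} The main obstacle is $J_4^n$, because it involves the infinite sum over modes. We write
\[
\scalar{\F[n]{\bar{\bsym{\xi}}^n}{\bar{\bsym{u}}_n}}{\bsym{y}}{H}
=\frac12\sum_i\int\bigl(P_n\bar{\bsym{\xi}}^n_i\bigr)\bigl(P_n\bar{\bsym{\xi}}^n_i\bigr)^{\intercal}\nabla\bar{\bsym{u}}_n:\nabla P_n\bsym{y}\,dx .
\]
To avoid derivatives on $\bar{\bsym{u}}_n$ (which only converges weakly in $L^2(0,T,V)$) we integrate by parts once more, moving the gradient onto $\bsym{\xi}\bsym{\xi}^{\intercal}\nabla\bsym{y}$. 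This produces terms of the form $\int \bar{\bsym{u}}_n\,\partial(\bar{\bsym{\xi}}_i\bar{\bsym{\xi}}_i^{\intercal}\nabla\bsym{y})$, and we control them mode by mode by
\[
\normeH{\bar{\bsym{u}}_n}\,\norme{\bar{\bsym{\xi}}^n_i}{L^4}\norme{\nabla\bar{\bsym{\xi}}^n_i}{L^4}+\dots
\]
In $2$D we use $H^{1/2}\hookrightarrow L^4$ and $\alpha\ge 1/2$. For each fixed $i$, convergence follows from strong convergence of $\bar{\bsym{u}}_n$ in $L^2(0,T,H)$ and of $\bar{\bsym{\xi}}^n_i$. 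To make the latter strong in $L^2(0,T,H^{1+\alpha'})$ for some $\alpha'<\alpha$ with $1+\alpha'>3/2$, we interpolate the $L^2(0,T,H)$ convergence with the uniform $L^2(0,T,H^{1+\alpha})$ bound. It then remains to show that the tail in $i$ is uniformly small. For this we use \eqref{eq:estimation_xi_bar} with a summable bound: the per-mode quantity is dominated by $\sup_t\norme{\bar{\bsym{\xi}}^n_i}{\alpha}\norme{\bar{\bsym{\xi}}^n_i}{1+\alpha}$. Combining this with the per-mode estimate \eqref{truc}, where $c$ does not depend on $n$, the tail $\sum_{i>N}$ is bounded by $C\sum_{i>N}\norme{\bsym{\xi}_{i,0}}{\alpha}^2\to0$. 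A standard $\varepsilon/3$ argument then closes the step, and the projection $P_n$ on $\bsym{\xi}$ and on $\bsym{y}$ is handled as before.

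\textbf{Stochastic integral.} For $J_5^n$ we show that the integrands $\G[n]{\bar{\bsym{\xi}}^n}{\bar{\bsym{u}}_n}^*\bsym{y}$ converge to $\G{\bar{\bsym{\xi}}^{\infty}}{\bar{\bsym{u}}}^*\bsym{y}$ in $L^2(0,T,\mathcal{L}_2(H,\R{}))$ in probability. Their squared norm is $\sum_i b(\bar{\bsym{\xi}}_i,\bsym{y},\bar{\bsym{u}})^2$, which is handled with the same mode-wise convergence and uniform tail argument using \eqref{eq:G_majoration}. Since $\bar{\bsym{W}}^n\to\bar{\bsym{W}}$ in $C^0(0,T,\domA{-\beta})$, the classical lemma on convergence of stochastic integrals (Bensoussan / Debussche–Glatt-Holtz–Temam) gives convergence in probability, hence almost surely along a subsequence, which is the claimed statement.
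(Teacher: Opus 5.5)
Your treatment of $J_1^n,J_2^n,J_3^n$, of $K_1^n,\dots,K_4^n$ and of $J_5^n$ follows the paper. You split off $P_n\bsym{y}-\bsym{y}$, use bilinearity against the strong $L^2(0,T,H)$ convergence, combine mode-wise convergence with a tail made uniformly small by \eqref{truc}, and invoke the stochastic-integral convergence lemma. You apply that lemma to the integrands tested against $\bsym{y}$ rather than in $\mathcal{L}_2(L^2,\domA{-\tilde\gamma})$, which is fine.

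The step that fails, as written, is $J_4^n$. After integrating by parts, you need strong convergence of $\nabla\bar{\bsym{\xi}}^n_i$ in $L^2(0,T;L^4)$ for each fixed $i$. You obtain it by interpolation only if some $\alpha'\in(1/2,\alpha)$ exists, that is, only for $\alpha>1/2$. But Theorem~\ref{thm:well_posedness_2d}(i) includes $\alpha=1/2$, which is the weakest assumption on the initial modes and exactly the value the paper uses for this term. At $\alpha=1/2$ the only control of $\nabla\bar{\bsym{\xi}}^n_i$ in $L^4$ is the uniform $L^2(0,T;H^{3/2})$ bound, and no interpolation upgrades a bound at that same level to strong convergence. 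So your argument does not justify the mode-wise limit of $\int\bar{\bsym{u}}_n\cdot(\nabla\bar{\bsym{\xi}}^n_i)\,\bar{\bsym{\xi}}^n_i\,\nabla\bsym{y}$ in that case.

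The missing observation is that the integration by parts is unnecessary. The fact that $\bar{\bsym{u}}_n$ converges only weakly in $L^2(0,T,V)$ is no obstruction, because the other factor converges strongly. The paper splits $J_4^n=J_{4,1}^n+J_{4,2}^n+J_{4,3}^n$:
\begin{itemize}
\item In $J_{4,2}^n$ the gradient stays on $\bar{\bsym{u}}_n$ and is used only through its $L^2(0,T,V)$ bound. Showing $\sum_i\bigl(\bar{\bsym{\xi}}^n_i\bar{\bsym{\xi}}^{n\intercal}_i-\bar{\bsym{\xi}}^\infty_i\bar{\bsym{\xi}}^{\infty\intercal}_i\bigr)\to0$ then needs only $L^4$ (i.e.\ $H^{1/2}$) convergence of the modes, via Cauchy--Schwarz in $i$ and the tail from \eqref{truc}.
\item In $J_{4,3}^n$ the modes are frozen at the limit $\bar{\bsym{\xi}}^\infty$, and the derivative is moved onto this fixed field. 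Equivalently, one uses $\bar{\bsym{u}}_n\rightharpoonup\bar{\bsym{u}}$ weakly in $L^2(0,T,V)$.
\end{itemize}
This covers all $\alpha\in[1/2,1)$.

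If you prefer to keep your integration by parts, you can repair the endpoint with a different H\"older split. Estimate $\bar{\bsym{\xi}}^n_i$ in $L^2(0,T;L^\infty)$ via $H^{3/2}\hookrightarrow L^\infty$. Then use $\nabla(\bar{\bsym{\xi}}^n_i-\bar{\bsym{\xi}}^\infty_i)\to0$ in $L^2(0,T;L^2)$, which interpolation does give.

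Two smaller points. Your route also needs $\nabla^2\bsym{y}\in L^\infty$, i.e.\ test functions in $\domA{\gamma}$ with $\gamma>3/2$; this is harmless since $\gamma$ is arbitrary, but you should say so. Also, after Cauchy--Schwarz in $i$ your tail bound is $C\bigl(\sum_{i>N}\|\bsym{\xi}_{i,0}\|_\alpha^2\bigr)^{1/2}$, not $C\sum_{i>N}\|\bsym{\xi}_{i,0}\|_\alpha^2$.
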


\begin{proof}
\begin{enumerate}[label=(\roman*)]

\item For $J_{1}^n$ we write 
\begin{equation*}
\scalar{\bar{\bsym{u}}_{n}(t) - \bar{\bsym{u}}_{n}(0)}{\bsym{y}}{H} = \big(\normeH{\bar{\bsym{u}}_{n}(t) - \bar{\bsym{u}}(t)} +\normeH{\bar{\bsym{u}}_{n}(0) - \bar{\bsym{u}}(0)} \big) \normeH{\bsym{y}}.
\end{equation*}
Thanks to \eqref{eq:convergence} $J_{1}^n \underset{n \rightarrow \infty}{\longrightarrow} J_{1}.$ Now $J_{2}^n.$  Since $A$ is symmetric,
\begin{equation*}
\int_0^t \scalar{A\bar{\bsym{u}}_n -A\bar{\bsym{u}}}{\bsym{y}}{H} ds \leq \sqrt{t}\norme{\bar{\bsym{u}}_n -\bar{\bsym{u}}}{L^2(0,T,H)}\norme{A\bsym{y}}{H}.
\end{equation*}
And again thanks to \eqref{eq:convergence}, $J_{2}^n \underset{n \rightarrow \infty}{\longrightarrow} J_{2}.$ Now $J_{3}^n.$ We write:
\begin{align*}
\int_0^t \scalar{B^n(\bar{\bsym{u}}_n,\bar{\bsym{u}}_n)- B(\bar{\bsym{u}},\bar{\bsym{u}})}{\bsym{y}}{H} ds &= \int_0^t \scalar{B(\bar{\bsym{u}}_n, \bar{\bsym{u}}_n)}{P_n\bsym{y}- \bsym{y}}{H} ds \\
&+ \int_0^t \scalar{B(\bar{\bsym{u}}_n, \bar{\bsym{u}}_n) - B(\bar{\bsym{u}}, \bar{\bsym{u}})}{\bsym{y}}{H} ds \\
&\leq J_{3,1}^n +J_{3,2}^n.
\end{align*}
Since $P_n\bsym{y}- \bsym{y} \in \domA{\gamma},$ we have:
\begin{align*}
J_{3,1}^n &\leq \int_0^t \norme{B(\bar{\bsym{u}}_n,\bar{\bsym{u}}_n)}{\domA{-\gamma}} ds \norme{P_n\bsym{y}-\bsym{y}}{\domA{\gamma}}\\
&\leq C \norme{\bar{\bsym{u}}_n}{L^{2}(0,T,H)}^2 \norme{P_n\bsym{y}-\bsym{y}}{\domA{\gamma}}.
\end{align*}
Thanks to \eqref{eq:estimation_u_bar} and \eqref{eq:convergence_norme_proj}, $J_{3,1}^n \underset{n \rightarrow \infty}{\longrightarrow} 0.$ Now we deal with $J_{3,2}^n.$ Because $B$ is bilinear, we can write: 
\begin{align*}
J_{3,2}^n &= \int_0^t \scalar{B(\bar{\bsym{u}}_n- \bar{\bsym{u}}, \bar{\bsym{u}}_n) + B(\bar{\bsym{u}}, \bar{\bsym{u}}_n - \bar{\bsym{u}})}{\bsym{y}}{H} ds \\
&\leq C\norme{\bsym{y}}{\domA{\gamma}} \int_0^t \normeH{\bar{\bsym{u}}_n - \bar{\bsym{u}}} \normeH{\bar{\bsym{u}}_n}  + \normeH{\bar{\bsym{u}}}\normeH{\bar{\bsym{u}}_n-\bar{\bsym{u}}} ds \\
&\leq C\norme{\bsym{y}}{\domA{\gamma}} \norme{\bar{\bsym{u}}_n-\bar{\bsym{u}}}{L^2(0,T,H)} \Bigl[\sup_n \norme{\bar{\bsym{u}}_n}{L^2(0,T,H)} +  \norme{\bar{\bsym{u}}}{L^2(0,T,H)} \Bigr].
\end{align*}
Thanks to \eqref{eq:convergence}, $\norme{\bar{\bsym{u}}_n-\bar{\bsym{u}}}{L^2(0,T,H)} \underset{n \rightarrow \infty}{\longrightarrow} 0.$ Since $(\bar{\bsym{u}}_n)_{n \in \N}$ converges, it is bounded. That proves $J_{3,2}^n \underset{n \rightarrow \infty}{\longrightarrow} 0,$ hence the convergence of $J_{3}^n$ towards $J_{3}.$ For the term $J_{4}^n,$ we compute:
\begin{equation*}
\begin{aligned}
\int_0^t \scalar{\F[n]{\bar{\bsym{\xi}}^n}{\bar{\bsym{u}}_n}}{\bsym{y}}{H} - \scalar{\F{\bar{\bsym{\xi}}^{\infty}}{\bar{\bsym{u}}}}{\bsym{y}}{H} ds = \int_0^t \scalar{\F{\bar{\bsym{\xi}}^n}{\bar{\bsym{u}}_n}}{P_n\bsym{y}-\bsym{y}}{H} \\
+ \scalar{\F{\bar{\bsym{\xi}}^n}{\bar{\bsym{u}}_n} - \F{\bar{\bsym{\xi}}^{\infty}}{\bar{\bsym{u}}_n}}{\bsym{y}}{H} \\
+ \scalar{\F{\bar{\bsym{\xi}}^{\infty}}{\bar{\bsym{u}}_n}- \F{\bar{\bsym{\xi}}^{\infty}}{\bar{\bsym{u}}}}{\bsym{y}}{H} ds \\
= J_{4,1}^n + J_{4,2}^n +J_{4,3}^n.
\end{aligned}
\end{equation*}
Start with $J_{4,1}^n.$ Thanks to \eqref{eq:F_majoration} and \eqref{eq:estimation_xi_bar},
\begin{align*}
J_{4,1}^n &\leq \norme{P_n\bsym{y}-\bsym{y}}{\domA{\gamma}} \int_0^t \norme{\F{\bar{\bsym{\xi}}^{n}}{\bar{\bsym{u}}_n}}{\domA{-\gamma}} ds \\
&\leq C\norme{P_n\bsym{y}-\bsym{y}}{\domA{\gamma}} \int_0^t \norme{\bar{\bsym{u}}_n}{V}^2 ds. 
\end{align*}
Thanks to \eqref{eq:estimation_u_bar} and \eqref{eq:convergence_norme_proj}, up to a subsequence and $J_{4,1}^n \underset{n \rightarrow \infty}{\longrightarrow} 0 \quad \bar{\mathbb{P}}-a.s.$ For $J_{4,2}^n,$ thanks to \eqref{eq:F_majoration}, we have 
\begin{equation*}
\begin{aligned}
\int_0^t \scalar{\F{\bar{\bsym{\xi}}^n}{\bar{\bsym{u}}_n} - \F{\bar{\bsym{\xi}}^{\infty}}{\bar{\bsym{u}}_n}}{\bsym{y}}{H} ds \leq \norme{\nabla \bsym{ y}}{L^{\infty}}\int_0^t \norme{\bar{\bsym{u}}_n}{V} \\ 
\normeH{\sum_{i=0}^{\infty}\big(\bar{\bsym{\xi}}_i^n\bar{\bsym{\xi}}_i^{n\intercal} - \bar{\bsym{\xi}}_i^{\infty}\bar{\bsym{\xi}}_i^{\infty \intercal}\big)} ds. 
\end{aligned}
\end{equation*}
We are left with the term under the integral. \rev{Applying Hölder's inequality,} 
\begin{align*}
\sum_{i=0}^{\infty} \normeH{\bigl(\bar{\bsym{\xi}}_i^n\bar{\bsym{\xi}}_i^{n\intercal} - \bar{\bsym{\xi}}_i^{\infty}\bar{\bsym{\xi}}_i^{\infty \intercal}\bigr)} ds &=  \sum_{i=0}^{\infty} \normeH{\bar{\bsym{\xi}}_i^n \bigl(\bar{\bsym{\xi}}_i^{n\intercal}-\bar{\bsym{\xi}}_i^{\infty \intercal}\bigr) + \bigl(\bar{\bsym{\xi}}_i^n-\bar{\bsym{\xi}}_i^{\infty}\bigr)\bar{\bsym{\xi}}_i^{\infty \intercal}} \\
&\leq 2 \sum_{i=0}^{\infty} \norme{\bar{\bsym{\xi}}_i^n}{L^4} \norme{\bar{\bsym{\xi}}_i^n - \bar{\bsym{\xi}}_i^{\infty}}{L^4}.
\end{align*}
We take $\alpha = \frac{1}{2}.$ By Sobolev embedding, $H^\alpha \hookrightarrow L^{4}.$ Therefore, we have the following inequality: 
\begin{equation*}
\sum_{i=0}^{\infty} \norme{\bar{\bsym{\xi}}_i^n}{L^4} \norme{\bar{\bsym{\xi}}_i^n - \bar{\bsym{\xi}}_i^{\infty}}{L^4} \leq \sum_{i=0}^{\infty} \norme{\bar{\bsym{\xi}}_i^n}{1/2} \norme{\bar{\bsym{\xi}}_i^n - \bar{\bsym{\xi}}_i^{\infty}}{1/2}.
\end{equation*}
By Cauchy-Schwarz inequality: 
\begin{equation*}
\sum_{i=0}^{\infty} \norme{\bar{\bsym{\xi}}_i^n}{1/2} \norme{\bar{\bsym{\xi}}_i^n - \bar{\bsym{\xi}}_i^{\infty}}{1/2} \leq \Big(\sum_{i=0}^{\infty} \norme{\bar{\bsym{\xi}}_i^n}{1/2}^2\Big)^{\frac{1}{2}} \Big(\sum_{i=0}^{\infty} \norme{\bar{\bsym{\xi}}_i^n - \bar{\bsym{\xi}}_i^{\infty}}{1/2}^2\Big)^{\frac{1}{2}}.
\end{equation*}
Thanks to \eqref{eq:estimation_xi_bar}, the first sum is bounded. By dominated convergence, \eqref{truc} and the assumption the initial data, the second term goes to zero. We can conclude that $J_{4,2}^n \underset{n \rightarrow \infty}{\longrightarrow} 0.$ The last term is $J_{4,3}^n:$
\begin{align*}
\int_0^t \scalar{\F{\bar{\bsym{\xi}}^{\infty}}{\bar{\bsym{u}}_n} - \F{\bar{\bsym{\xi}}^{\infty}}{\bar{\bsym{u}}}}{\bsym{y}}{H} ds &= \int_0^t \scalar{\bar{\bsym{u}}_n- \bar{\bsym{u}}}{\F{\bar{\bsym{\xi}}^{\infty}}{\bar{\bsym{y}}}}{H} ds \\
&\leq C\int_0^t \normeH{\F{\bar{\bsym{\xi}}^{\infty}}{\bar{\bsym{y}}}}^2 ds \norme{\bar{\bsym{u}}_n - \bar{\bsym{u}}}{L^2(0,T,H)}.
\end{align*}
And this goes to $0$ thanks to \eqref{eq:convergence}. We proved that $J_{4}^n \underset{n \rightarrow \infty}{\longrightarrow} J_{4}.$ For the convergence of $J_{5}^n,$ we apply the following Lemma (the reader can refer to  \cite{Debussche_Glatt-Holtz_Temam} for further details):
\begin{lemme}
\label{lemme:convergence_stochastique}
	  	Consider a sequence of stochastic basis $S_n = \bigl(\Omega, \mathcal{F}, \{\mathcal{F}_t^n\}_{t \in [0,T]}, \mathbb{P}, \bsym{W}^n\bigr)$ with $\bsym{W}^n$ a cylindrical Wiener process (over $U$) with respect to $\mathcal{F}_t^n.$ Assume that $(\psi^n)_{n \geq 1}$ is a collection of $X-$ valued $\mathcal{F}_t^n-$predictable processes such that $\bsym{\psi}^n \in L^2\bigl(0,T,\mathcal{L}_2(U,X)\bigr)$ a.s. Finally consider $S = \bigl(\Omega, \mathcal{F}, \mathbb{P}, \bsym{W}\bigr)$ with $\bsym{W}$ a cylindrical Wiener process (over $U$) and $\bsym{\psi} \in L^2\bigl(0,T,\mathcal{L}_2(U,X)\bigr) $ which is $\mathcal{F}_t-$predictable. If: 
	  	\begin{center}
	  		$\bsym{W}^n \underset{n \rightarrow \infty}{\overset{\bar{\mathbb{P}}}{\longrightarrow}} \bsym{W} \quad in \quad C^0(0,T,U_0),$ \\
	  		$\bsym{\psi}^n\underset{n \rightarrow \infty}{\overset{\bar{\mathbb{P}}}{\longrightarrow}} \bsym{\psi} \quad in \quad L^2\bigl(0,T,\mathcal{L}_2(U,X)\bigr),$ 
	  	\end{center}
Then, 
	    \begin{center}
	  	    $\displaystyle\int_0^t \bsym{\psi}^n d\bsym{W}_s^n \underset{n \rightarrow \infty}{\overset{\bar{\mathbb{P}}}{\longrightarrow}} \displaystyle\int_0^t \bsym{\psi} d\bsym{W}_s \quad in \quad L^2(0,T,X).$ 
	    \end{center}
	  
\end{lemme}
Consider $\tilde{\gamma} > \gamma.$ We take $\bsym{\psi}^n = \G{\bar{\bsym{\xi}}^n}{\bar{\bsym{u}}_n} \in L^2\bigl(0,T,\mathcal{L}_2\bigl(L^2,\domA{-\tilde{\gamma}}\bigr)\bigr).$ What we have to prove is the convergence $\G[n]{\bar{\bsym{\xi}}^n}{\bar{\bsym{u}}_n} \overset{\bar{\mathbb{P}}}{\underset{n \rightarrow \infty}{\longrightarrow}} \G{\bar{\bsym{\xi}}^{\infty}}{\bar{\bsym{u}}} $ in $L^2\bigl(0,T,\mathcal{L}_2\bigl(L^2,\domA{-\tilde{\gamma}}\bigr)\bigr):$
\begin{equation*}
    \begin{aligned}
\int_0^t \norme{\G[n]{\bar{\bsym{\xi}}^n}{\bar{\bsym{u}}_n} - \G{\bar{\bsym{\xi}}^{\infty}}{\bar{\bsym{u}}}}{\mathcal{L}_2\big(L^2,\domA{-\tilde{\gamma}}\big)}^2 ds  \leq \\
3 \int_0^t \norme{\G[n]{\bar{\bsym{\xi}}^n}{\bar{\bsym{u}}_n} - \G{\bar{\bsym{\xi}}^n}{\bar{\bsym{u}}_n}}{\mathcal{L}_2\bigl(L^2,\domA{-\tilde{\gamma}}\bigr)}^2 \\
+  \norme{\G{\bar{\bsym{\xi}}^n}{\bar{\bsym{u}}_n} - \G{\bar{\bsym{\xi}}^n}{\bar{\bsym{u}}}}{\mathcal{L}_2\bigl(L^2,\domA{-\tilde{\gamma}}\bigr)}^2  \\
+ \norme{\G{\bar{\bsym{\xi}}^n}{\bar{\bsym{u}}} - \G{\bar{\bsym{\xi}}^{\infty}}{\bar{\bsym{u}}}}{\mathcal{L}_2\bigl(L^2,\domA{-\tilde{\gamma}}\bigr)}^2 ds \\
= a_n + b_n + c_n. 
\end{aligned}
\end{equation*}
Start with $a_n.$ Thanks to \eqref{eq:majoration_projecteur}, we have:
\begin{align*}
a_n &= \int_0^t \norme{\big(P_n-I\big)\G{\bar{\bsym{\xi}}^n}{\bar{\bsym{u}}_n}}{\mathcal{L}_2\bigl(L^2,\domA{-\tilde{\gamma}}\bigr)}^2 ds \\
&\leq \int_0^t \lambda_{n+1}^{-(\gamma - \tilde{\gamma})} \norme{\G{\bar{\bsym{\xi}}^n}{\bar{\bsym{u}}_n}}{\mathcal{L}_2\bigl(L^2,\domA{-\gamma}\bigr)}^2ds .
\end{align*}
Hence thanks to \eqref{eq:A-B_majoration} and \eqref{eq:G_majoration},
\begin{align*}
a_n &\leq \frac{1}{\lambda_{n+1}^{\gamma - \tilde{\gamma}}} \int_0^t \sum_{i=0}^{\infty} \norme{B(\bar{\bsym{\xi}}_i^n, \bar{\bsym{u}}_n)}{\domA{-\gamma}}^2 ds. \\
&\leq \frac{1}{\lambda_{n+1}^{\gamma - \tilde{\gamma}}} \int_0^t \sum_{i=0}^{\infty} \normeH{\bar{\bsym{\xi}}_i^n}^2 \normeH{\bar{\bsym{u}}_n}^2 ds.
\end{align*}
And we conclude thanks to \eqref{eq:estimation_limite} and the fact that $(\lambda_i)_{i \in \N}$ is an increasing unbounded sequence that $a_n \underset{n \rightarrow \infty}{\longrightarrow} 0.$ Now $b_n.$ Thanks to \eqref{eq:estimation_xi_bar},
\begin{align*}
b_n &\leq \int_0^t \sum_{i=0}^{\infty} \norme{B(\bar{\bsym{\xi}}_i^n, \bar{\bsym{u}}_n - \bar{\bsym{u}})}{\domA{-\tilde{\gamma}}}^2 ds \\
&\leq C \norme{\bar{\bsym{u}}_n - \bar{\bsym{u}}}{L^2(0,T,H)}^2.
\end{align*}
And $b_n \underset{n \rightarrow \infty}{\longrightarrow} 0.$ As for $c_n:$
\begin{align*}
c_n &\leq \int_0^t \sum_{i=0}^{\infty} \norme{B(\bar{\bsym{\xi}}_i^n-\bar{\bsym{\xi}}_i^{\infty}, \bar{\bsym{u}})}{\domA{-\tilde{\gamma}}}^2 ds \\
&\leq \normeH{\bar{\bsym{u}}}^2  \sum_{i=0}^{\infty} \norme{\bar{\bsym{\xi}}_i^n-\bar{\bsym{\xi}}_i^{\infty}}{L^2(0,T,H)}^2.
\end{align*}
And by dominated convergence, thanks to \eqref{eq:estimation_xi_bar}, $c_n \underset{n \rightarrow \infty}{\longrightarrow} 0.$ Therefore, $J_{5}^n \underset{n \rightarrow \infty}{\longrightarrow} J_{5}$ in probability in $L^2\bigl(0,T,\domA{-\tilde{\gamma}}\bigr).$ Note that convergence occurs in $L^2\bigl(0,T,\domA{-\tilde{\gamma}}\bigr)$ rather than in $L^2\bigl(0,T,\domA{-\gamma}\bigr).$ Since $\gamma > 1$ is arbitrary, this does not affect the tightness or the convergence results.
\item We now study the terms in the second equation. We start with $K_{1}^n,$ and fix $i \in \N:$ 
\begin{equation*}
\scalar{\bar{\bsym{\xi}}^n_i(t) - \bar{\bsym{\xi}}_i^{\infty}(t)}{\bsym{z}}{H} \leq \norme{\bsym{z}}{\domA{\gamma}} \norme{\bar{\bsym{\xi}}^n_i - \bar{\bsym{\xi}}_i^{\infty}}{C^0\big(0,T, \domA{-\gamma}\big)}.
\end{equation*}
And thanks to \eqref{eq:convergence}, the above scalar product goes to $0$ with $n.$ Moreover, 
\begin{align*}
\scalar{P_n\bar{\bsym{\xi}}^n_i(0) - \bar{\bsym{\xi}}_i^{\infty}(0)}{\bsym{z}}{H} &= \scalar{\bar{\bsym{\xi}}^n_i(0)}{P_n\bsym{z}-\bsym{z}}{H} + \scalar{\bar{\bsym{\xi}}^n_i(0) - \bar{\bsym{\xi}}_i^{\infty}(0)}{\bsym{z}}{H} \\
&\leq \normeH{\bar{\bsym{\xi}}^n_i(0)} \normeH{P_n\bsym{z}-\bsym{z}} + \norme{\bar{\bsym{\xi}}^n_i - \bar{\bsym{\xi}}_i^{\infty}}{C^0\bigl(0,T, \domA{-\gamma}\bigr)} \normeH{\bsym{z}}.
\end{align*}
And this goes to $0$ with $n$ thanks to \eqref{eq:convergence_norme_proj} and \eqref{eq:convergence}. Hence $K_{1}^n \underset{n \rightarrow \infty}{\longrightarrow} K_{1}.$ Next $K_{2}^n.$ Since $A$ is symmetric, we can write
\begin{equation*}
\int_0^t \scalar{A\bar{\bsym{\xi}}^n_i - A\bar{\bsym{\xi}}_i^{\infty}}{\bsym{z}}{H} ds \leq \norme{\bar{\bsym{\xi}}^n_i - \bar{\bsym{\xi}}_i^{\infty}}{L^2(0,T,H)} \normeH{A\bsym{z}}.
\end{equation*}
This proves that $K_{2}^n \underset{n \rightarrow \infty}{\longrightarrow} K_{2}$ thanks to \eqref{eq:convergence}. For $K_{3}^n,$ write
\begin{align*}
\int_0^t \scalar{B^n(\bar{\bsym{u}}_n, \bar{\bsym{\xi}}^n_i)}{\bsym{z}}{H} - \scalar{B(\bar{\bsym{u}}, \bar{\bsym{\xi}}_i^{\infty})}{\bsym{z}}{H} ds &= \int_0^t \scalar{B(\bar{\bsym{u}}_n, \bar{\bsym{\xi}}^n_i)}{P_n\bsym{z}-\bsym{z}}{H} \\
&+ \scalar{B(\bar{\bsym{u}}_n, \bar{\bsym{\xi}}^n_i) - B(\bar{\bsym{u}}, \bar{\bsym{\xi}}_i^{\infty})}{\bsym{z}}{H} ds \\
&= K_{3,1}^n + K_{3,2}^n.
\end{align*}
For the first term $K_{3,1}^n,$ we can write: 
\begin{align*}
\int_0^t \scalar{B(\bar{\bsym{u}}_n, \bar{\bsym{\xi}}^n_i)}{P_n\bsym{z}-\bsym{z}}{H} ds &\leq \int_0^t \norme{B(\bar{\bsym{u}}_n, \bar{\bsym{\xi}}^n_i)}{\domA{-\gamma}} \norme{P_n\bsym{z}-\bsym{z}}{\domA{\gamma}} ds \\
&\leq \int_0^t \normeH{\bar{\bsym{u}}_n} \normeH{\bar{\bsym{\xi}}^n_i} \norme{P_n\bsym{z}-\bsym{z}}{\domA{\gamma}} ds \\
&\leq  \norme{P_n\bsym{z}-\bsym{z}}{\domA{\gamma}} \norme{\bar{\bsym{u}}_n}{L^2(0,T,H)} \norme{\bar{\bsym{\xi}}^n_i}{L^2(0,T,H)}.
\end{align*}
Since \eqref{eq:convergence} holds, both $\norme{\bar{\bsym{u}}_n}{L^2(0,T,H)}$  and $\norme{\bar{\bsym{\xi}}^n_i}{L^2(0,T,H)}$ are uniformly bounded in $n.$ Thanks to \eqref{eq:convergence_norme_proj}, $K_{3,1}^n$ goes to $0$ with $n.$ As for the second term $K_{3,2}^n,$ we can write by bilinearity of $B$ that 
\begin{equation*}
B(\bar{\bsym{u}}_n, \bar{\bsym{\xi}}^n_i) - B(\bar{\bsym{u}}, \bar{\bsym{\xi}}_i^{\infty}) = B(\bar{\bsym{u}}_n - \bar{\bsym{u}}, \bar{\bsym{\xi}}^n_i) - B(\bar{\bsym{u}}, \bar{\bsym{\xi}}_i^n - \bar{\bsym{\xi}}_i^{\infty}).
\end{equation*}
Thus we can evaluate that: 
\begin{align*}
K_{3,2}^n &\leq \int_0^t \norme{\bsym{z}}{\domA{}}\Big[\normeH{\bar{\bsym{u}}_n - \bar{\bsym{u}}}\normeH{\bar{\bsym{\xi}}^n_i} + \normeH{\bar{\bsym{u}}}\normeH{\bar{\bsym{\xi}}_i^n - \bar{\bsym{\xi}}_i^{\infty}}  \Big] ds \\
&\leq \norme{\bsym{z}}{\domA{}} \norme{\bar{\bsym{u}}_n - \bar{\bsym{u}}}{L^2(0,T,H)}\norme{\bar{\bsym{\xi}}^n_i}{L^2(0,T,H)} + \norme{\bar{\bsym{u}}}{C^0(0,T,H)}\norme{\bar{\bsym{\xi}}_i^n - \bar{\bsym{\xi}}_i^{\infty}}{L^2(0,T,H)}.
\end{align*}
Thanks to \eqref{eq:convergence}, $K_{3,2}^n$ goes to $0$ with $n.$ The calculation for the last term $K_4^n$ is identical.
\end{enumerate}
\end{proof}
This proves that when we pass to the limit in \eqref{eq:pb_variationel}, we obtain for all $\bsym{y}, \bsym{z}\in \domA{\gamma}$ and almost surely in $(t,\omega) \in [0,T] \times \bar{\Omega}$
\begin{equation}
\label{eq:pb_variationel_solution}
\left\{
	\begin{aligned}
\scalar{\bar{\bsym{u}}(t) - \bar{\bsym{u}}(0)}{\bsym{y}}{H} + \displaystyle\int_{0}^t \scalar{A\bar{\bsym{u}}}{\bsym{y}}{H} + \scalar{B(\bar{\bsym{u}},\bar{\bsym{u}})}{\bsym{y}}{H} \\
+ \scalar{\F{\bar{\bsym{\xi}}^{\infty}}{\bar{\bsym{u}}}}{\bsym{y}}{H} ds = \scalar[\bigg]{\displaystyle\int_0^t \G{\bar{\bsym{\xi}}^{\infty}}{\bar{\bsym{u}}}d\bar{\bsym{W}}_s}{\bsym{y}}{H}, \\[0.5em]
\scalar{\bar{\bsym{\xi}}_i^{\infty}(t) - \bar{\bsym{\xi}}_i^{\infty}(0)}{\bsym{z}}{H} + \displaystyle \int_{0}^t \scalar{A \bar{\bsym{\xi}}_i^{\infty}}{\bsym{z}}{H} + \scalar{B(\bar{\bsym{\xi}}_i^{\infty},\bar{\bsym{u}})}{\bsym{z}}{H} \\
+ \scalar{B(\bar{\bsym{u}},\bar{\bsym{\xi}}_i^{\infty})}{\bsym{z}}{H} ds = 0, \qquad i \in \N.
	\end{aligned}
	\right.
\end{equation}
This is the definition of a martingale solution. Hence the equality, in the space $\domA{-\gamma},$ for almost all $(t,\omega) \in [0,T]\times \bar{\Omega}$  
\begin{equation}
\label{eq:solution}
\left\{
	\begin{aligned}
&\bar{\bsym{u}}(t) - \bar{\bsym{u}}(0) + \displaystyle\int_{0}^t A\bar{\bsym{u}} + B(\bar{\bsym{u}},\bar{\bsym{u}})
+ \F{\bar{\bsym{\xi}}^{\infty}}{\bar{\bsym{u}}} ds = \displaystyle\int_0^t \G{\bar{\bsym{\xi}}^{\infty}}{\bar{\bsym{u}}}d\bar{\bsym{W}}_s, \\[0.5em]
&\bar{\bsym{\xi}}_i^{\infty}(t) - \bar{\bsym{\xi}}^{\infty}_i(0) + \displaystyle \int_{0}^t A \bar{\bsym{\xi}}_i^{\infty} + B(\bar{\bsym{\xi}}_i^{\infty},\bar{\bsym{u}}) + B(\bar{\bsym{u}},\bar{\bsym{\xi}}_i^{\infty}) ds = 0, \qquad i \in \N.
	\end{aligned}
	\right.
\end{equation}
We have thus proved the existence statement in Theorem \ref{thm:well_posedness_2d} (i) and equation \eqref{eq:equation_operateurs} admits at least a martingale solution.

\section{Pathwise solution}
\label{sec:pathwise_solution}

{In this section, we prove pathwise uniqueness and then obtain existence on the prescribed stochastic basis by the Gy\"ongy--Krylov argument. This requires a stronger estimate for the noise modes. Let $s>1$ and choose $\alpha\in(1,s)$ such that $\sum_{i=0}^{\infty}\norme{\bsym{\xi}_{i,0}}{\alpha}^2<\infty$, as assumed in Theorem \ref{thm:well_posedness_2d}(ii). We have the following energy estimates:}
\begin{equation}
\label{eq:estimee_energie_xi_s>1}
\int_0^T \sum_{i=0}^{\infty} \norme{\bsym{\xi}_i^n}{s+\alpha}^{2} dt \leq C_3(\alpha), \hspace{10pt} \sum_{i=0}^{\infty} \sup_{0 \leq t \leq T} \norme{\bsym{\xi}_i^n}{\alpha}^2 \leq C_3(\alpha) \quad a.s.
\end{equation}
\begin{proof}
{Testing the noise-mode equation against $A^\alpha\bsym{\xi}_i^n$, with $1<\alpha<s$, gives}
\begin{equation}
\label{eq:estimee_energie_intermediaire_xi_s>1}
\frac12\partial_t\norme{\bsym{\xi}_i^n}{\alpha}^2+\norme{\bsym{\xi}_i^n}{s+\alpha}^2
=-b(\bsym{u}_n,\bsym{\xi}_i^n,A^\alpha\bsym{\xi}_i^n)-b(\bsym{\xi}_i^n,\bsym{u}_n,A^\alpha\bsym{\xi}_i^n).
\end{equation}
{The Sobolev embeddings used above, now with $1<\alpha<s$, imply}
\begin{equation}
\label{eq:estimee_energie_intermediaire_xi_2_s>1}
\frac12\partial_t\norme{\bsym{\xi}_i^n}{\alpha}^{2}+\norme{\bsym{\xi}_i^n}{s+\alpha}^{2}
\leq 2\norme{\bsym{\xi}_i^n}{\alpha}\norme{\bsym{u}_n}{V}\norme{\bsym{\xi}_i^n}{s+\alpha}.
\end{equation}
{Young's inequality and Gronwall's lemma therefore give
\[
\norme{\bsym{\xi}_i^n(t)}{\alpha}^2\leq \norme{\bsym{\xi}_i^n(0)}{\alpha}^2
\exp(C\int_0^T\norme{\bsym{u}_n}{V}^2dt).
\]
Summing over $i$, using \eqref{eq:estimee_energie_u}, and integrating \eqref{eq:estimee_energie_intermediaire_xi_2_s>1} proves \eqref{eq:estimee_energie_xi_s>1}.}
\end{proof}
The same arguments apply and we similarly obtain a solution, also denoted $(\bsym{u}, \bsym{\xi}^{\infty}).$ These estimates hold at the limit:
\begin{equation}
\label{eq:estimee_energie_xi_lim_s>1}
\int_0^T \sum_{i=0}^{\infty} \norme{\bsym{\xi}_i^{\infty}}{s+\alpha}^{2} dt \leq C_3(\alpha), \hspace{10pt} \rev{\sum_{i=0}^{\infty} \sup_{0 \leq t \leq T} \norme{\bsym{\xi}_i^{\infty}}{\alpha}^2} \leq C_3(\alpha) \quad a.s.
\end{equation}
Consider now $(\bsym{u}, \bsym{\xi})$ and $(\bsym{u}^{*}, \bsym{\xi}^{*})$ two couples of martingale solutions on the same filtered probability space, with the same Brownian motion of the system \eqref{eq:equation_operateurs}. Consider $\bsym{w} = \bsym{u}-\bsym{u}^*$ and $\bsym{\chi}_i = \bsym{\xi}_i - \bsym{\xi}_i^*$ for all $i \in \N.$ The system solved by $(\bsym{w}, \bsym{\chi})$ is: 
\begin{equation}
\label{eq:systeme_w_chi}
\left\{
\begin{aligned}
d_t \bsym{w} &= -\bigl( A\bsym{w} + (\bsym{u}\bcdot \nabla)\bsym{u} - (\bsym{u}^*\bcdot \nabla)\bsym{u}^*\bigr)dt  \\ 
&\Bigl(\displaystyle\sum_{i=0}^{\infty} \nabla \bcdot \Bigl(\bsym{\xi}_{i} \bsym{\xi}_{i}^{\intercal}\nabla\bsym{u} -  \bsym{\xi}_{i}^* \bsym{\xi}_{i}^{*\intercal}\nabla\bsym{u}^*\bigr)\Bigr)dt  + \displaystyle \sum_{i=0}^{\infty} \bigl((\bsym{\xi}_i\bcdot\nabla)\bsym{u} - (\bsym{\xi}_i^*\bcdot\nabla)\bsym{u}^*\bigr)d\beta_i,  \\[0.5em]
\partial_t \bsym{\chi}_i &= - A^s\bsym{\chi}_i + (\bsym{u}\bcdot\nabla)\bsym{\xi}_i + (\bsym{\xi}_i\bcdot\nabla)\bsym{u}  - (\bsym{u}^*\bcdot\nabla)\bsym{\xi}^*_i - (\bsym{\xi}^*_i\bcdot\nabla)\bsym{u}^*, \qquad i \in \N.      
\end{aligned}
\right.
\end{equation}	
Consider $g(t) = K \displaystyle \int_0^t 1 + \normeH{\bsym{u}(r)}^2 + \norme{\bsym{u}(r)}{V}^2 dr,$ where $K \in \R{}$ will be chosen below and denote $e(t) = \exp(-g(t)).$ Apply Itô formula on $(t, \bsym{x}) \mapsto e(t)\normeH{\bsym{x}}^2.$ For all $t\in [0,T],$
\begin{multline}
\label{eq:Ito_w}
    e(t)\normeH{\bsym{w}(t)}^2 = \int_0^t e(r)\scalar[\bigg]{\bsym{w}}{\Big[\G{\bsym{\xi}}{\bsym{u}} - \G{\bsym{\xi}^*}{\bsym{u}^*}\Big] d\bsym{W}_r}{H} - \frac{1}{2} \int_0^t g'(r) e(r) \normeH{\bsym{w}}^2 dr \\
    - \int_0^t e(r) \scalar[\big]{\bsym{w}}{A \bsym{w} + B(\bsym{u}, \bsym{u}) - B(\bsym{u}, \bsym{u}^*) + \F{\bsym{\xi}}{\bsym{u}} - \F{\bsym{\xi}^*}{\bsym{u}^*}}{H}  dr \\
    + \int_0^t e(r)\norme{\G{\bsym{\xi}}{\bsym{u}} - \G{\bsym{\xi}^*}{\bsym{u}^*}}{\mathcal{L}_{2}(L^2,H)}^2 dr.
\end{multline}
We now compute the (classical) derivative of $t \mapsto \frac{1}{2}e(t) \norme{\bsym{\chi}_i(t)}{\alpha}^2:$
\begin{equation*}
    \frac{d}{dt} \frac{1}{2}\scalar{e(t)\bsym{\chi}_i}{A^{\alpha}\bsym{\chi}_i}{H} = \frac{1}{2}e'(t)\norme{\bsym{\chi}_i}{\alpha}^2 + \frac{1}{2}\scalar{e(t)d_t\bsym{\chi}_i}{A^{\alpha}\bsym{\chi}_i}{H}.
\end{equation*}
We replace $d_t \bsym{\chi}_i$ by its expression in \eqref{eq:systeme_w_chi}:
\begin{multline}
\label{eq:prod_scal_chi}
    \frac{d}{dt} \frac{1}{2}\big[ e(t)\norme{\bsym{\chi}_i}{\alpha}^2 \big] + \frac{e(t)}{2}\norme{\bsym{\chi}_i}{s+\alpha}^{2} = - \frac{1}{2}g'(t)e(t)\norme{\bsym{\chi}_i}{\alpha}^2 \\ + \frac{e(t)}{2}  \big(- b(\bsym{u},\bsym{\xi}_i,A^\alpha\bsym{\chi}_i) - b(\bsym{\xi}_i,\bsym{u},A^\alpha\bsym{\chi}_i) \\ 
    + b(\bsym{u}^*,\bsym{\xi}^*_i,A^\alpha\bsym{\chi}_i) + b(\bsym{\xi}^*_i,\bsym{u}^*,A^\alpha\bsym{\chi}_i) \big).
\end{multline}
We integrate Equation \eqref{eq:prod_scal_chi} between $0$ and $t.$ We now sum over $i \in \N.$ The sum is finite thanks to \eqref{eq:estimee_energie_intermediaire_xi_2_s>1}. Next, we add it to Equation \eqref{eq:Ito_w}:
\begin{multline}
\label{eq:majoration_w_chi}
    e(t)\Big(\normeH{\bsym{w}}^2 + \frac{1}{2} \sum_{i=0}^{\infty} \norme{\bsym{\chi}_i}{\alpha}^2\Big) + \int_0^t \norme{\bsym{w}}{V}^2 + \sum_{i=0}^{\infty} \frac{1}{2}\norme{\bsym{\chi}_i}{s+\alpha}^2 dr  \\
    =  \int_0^t e(r)\scalar[\bigg]{\bsym{w}}{\Big[\G{\bsym{\xi}}{\bsym{u}} - \G{\bsym{\xi}^*}{\bsym{u}^*}\Big] d\bsym{W}_r}{H} \\
    - \frac{1}{2} \int_0^t g'(r) e(r) \normeH{\bsym{w}}^2 dr \\
     - \int_0^t e(r) \scalar[\big]{B(\bsym{u}, \bsym{u}) - B(\bsym{u}, \bsym{u}^*)}{\bsym{w}}{H} + e(r)\scalar{\F{\bsym{\xi}}{\bsym{u}} - \F{\bsym{\xi}^*}{\bsym{u}^*}}{\bsym{w}}{H}  dr \\
    + \frac{1}{2} \int_0^t e(r)\norme{\G{\bsym{\xi}}{\bsym{u}} - \G{\bsym{\xi}^*}{\bsym{u}^*}}{\mathcal{L}_{2}(L^2,H)}^2 dr - \frac{1}{2} \sum_{i=0}^{\infty}  \int_0^t g'(r)e(r)\norme{\bsym{\chi}_i}{\alpha}^2 dr \\ 
    +  \frac{1}{2} \sum_{i=0}^{\infty} \int_0^t e(r) \big(- b(\bsym{u},\bsym{\xi}_i,A^\alpha\bsym{\chi}_i) - b(\bsym{\xi}_i,\bsym{u},A^\alpha\bsym{\chi}_i) + b(\bsym{u}^*,\bsym{\xi}^*_i,A^\alpha\bsym{\chi}_i) + b(\bsym{\xi}^*_i,\bsym{u}^*,A^\alpha\bsym{\chi}_i) \big) dr.
\end{multline}
We can give an upper bound for the terms on the right hand side of Equation \eqref{eq:majoration_w_chi}. First, we treat the terms associated to the different operators. Then we will focus on the martingale term and the two correction terms namely $\frac{1}{2} \int_0^t g'(r) e(r) \normeH{\bsym{w}}^2 dr$ and $\frac{1}{2} \int_0^t g'(r)e(r)\norme{\bsym{\chi}_i}{\alpha}^2 dr.$ We start with operator $B:$ 
\begin{align*}
\scalar{B(\bsym{u},\bsym{u})}{\bsym{w}}{H} - \scalar{B(\bsym{u}^*,\bsym{u}^*)}{\bsym{w}}{H} &= \scalar{B(\bsym{w}, \bsym{u})}{\bsym{w}}{H} \\
&\leq C \normeH{\bsym{w}} \norme{\bsym{u}}{V}\norme{\bsym{w}}{V}.
\end{align*}
We now study the terms with operators $F$ and $G:$  
\begin{align*}
\Lambda &= - \sum_{i=0}^{\infty} \frac{1}{2}\scalar{\bsym{\xi}_{i} \bsym{\xi}_{i}^{\intercal}\nabla\bsym{u} -  \bsym{\xi}_{i}^* \bsym{\xi}_{i}^{*\intercal}\nabla\bsym{u}^*}{\nabla\bsym{w}}{H} + \frac{1}{2} \norme{\G{\bsym{\xi}}{\bsym{u}} - \G{\bsym{\xi}^*}{\bsym{u}^*}}{\mathcal{L}_{2}(L^2,H)}^2 \\
& = \sum_{i=0}^{\infty} - \frac{1}{2}\scalar{\bsym{\xi}_{i} \bsym{\xi}_{i}^{\intercal}\nabla\bsym{u} -  \bsym{\xi}_{i}^* \bsym{\xi}_{i}^{*\intercal}\nabla\bsym{u}^*}{\nabla\bsym{w}}{H} + \frac{1}{2} \norme{\bsym{\xi}_i^\intercal \nabla \bsym{u} - \bsym{\xi}_i^{* \intercal} \nabla \bsym{u}^*}{\mathcal{L}_{2}(L^2,H)}^2.
\end{align*}
$\Lambda$ reduces to: 
\begin{align*}
\Lambda &= \frac{1}{2}\sum_{i=0}^{\infty} \scalar[\big]{\bsym{\xi}_{i} \bsym{\xi}_{i}^{\intercal}\nabla\bsym{u}}{\nabla \bsym{u}^*}{H} + \scalar{\bsym{\xi}_{i}^* \bsym{\xi}_{i}^{*\intercal}\nabla\bsym{u}^*}{\nabla \bsym{u}}{H}  - 2 \scalar{\bsym{\xi}_i^\intercal \nabla \bsym{u}}{\bsym{\xi}_i^{* \intercal} \nabla \bsym{u}^*}{H} \\
&= \frac{1}{2}\sum_{i=0}^{\infty} \scalar{\bsym{\xi}_{i} \bsym{\xi}_{i}^{\intercal}\nabla\bsym{u} + \bsym{\xi}_{i}^* \bsym{\xi}_{i}^{*\intercal}\nabla\bsym{u} - 2 \bsym{\xi}_{i}^*\bsym{\xi}_i^\intercal \nabla \bsym{u}}{\nabla \bsym{u}^*}{H} \\
&= \Lambda_1 + \Lambda_2.
\end{align*}
Where 
\begin{align*}
\Lambda_1 &= \frac{1}{2}\sum_{i=0}^{\infty} \scalar{\bsym{\xi}_{i} \bsym{\xi}_{i}^{\intercal}\nabla\bsym{u} + \bsym{\xi}_{i}^* \bsym{\xi}_{i}^{*\intercal}\nabla\bsym{u} - 2 \bsym{\xi}_{i}^*\bsym{\xi}_i^\intercal \nabla \bsym{u}}{\nabla \bsym{u}^*-\bsym{u}}{H} \\
\Lambda_2 &= \frac{1}{2}\sum_{i=0}^{\infty} \scalar{\bsym{\xi}_{i} \bsym{\xi}_{i}^{\intercal}\nabla\bsym{u} + \bsym{\xi}_{i}^* \bsym{\xi}_{i}^{*\intercal}\nabla\bsym{u} - 2 \bsym{\xi}_{i}^*\bsym{\xi}_i^\intercal \nabla \bsym{u}}{\nabla \bsym{u}}{H}.
\end{align*}
We have to estimate both $\Lambda_1$ and $\Lambda_2.$ Start with $\Lambda_1:$
\begin{align*}
\Lambda_1 = - \frac{1}{2} \displaystyle\sum_{i=0}^{\infty} \scalar{[\bsym{\chi}_i\bsym{\xi}_i^\intercal - \bsym{\xi}_i^* \bsym{\chi}_i^\intercal] \nabla \bsym{u}}{\nabla \bsym{w}}{H}. \\
\end{align*}
Now $\Lambda_2:$
\begin{align*}
\Lambda_2 &= \frac{1}{2} \displaystyle\sum_{i=0}^{\infty}\frac{1}{2} \scalar{[\bsym{\xi}_{i} \bsym{\xi}_{i}^{\intercal} + \bsym{\xi}_{i}^* \bsym{\xi}_{i}^{*\intercal} -2 \bsym{\xi}_{i}^*\bsym{\xi}_i^\intercal] \nabla \bsym{u}}{\nabla \bsym{u}}{H} \frac{1}{2} \scalar{[\bsym{\xi}_{i} \bsym{\xi}_{i}^{\intercal} + \bsym{\xi}_{i}^* \bsym{\xi}_{i}^{*\intercal} -2 \bsym{\xi}_{i}^*\bsym{\xi}_i^\intercal] \nabla \bsym{u}}{\nabla \bsym{u}}{H}  \\
&=  \frac{1}{2}\displaystyle \sum_{i=0}^{\infty} \frac{1}{2} \scalar{[\bsym{\xi}_{i} \bsym{\xi}_{i}^{\intercal} + \bsym{\xi}_{i}^* \bsym{\xi}_{i}^{*\intercal} -2 \bsym{\xi}_{i}^*\bsym{\xi}_i^\intercal] \nabla \bsym{u}}{\nabla \bsym{u}}{H} + \frac{1}{2} \scalar{ \nabla \bsym{u}}{[\bsym{\xi}_{i} \bsym{\xi}_{i}^{\intercal} + \bsym{\xi}_{i}^* \bsym{\xi}_{i}^{*\intercal} -2 \bsym{\xi}_{i}\bsym{\xi}_i^{*\intercal}]\nabla \bsym{u}}{H} \\
&= \frac{1}{2} \displaystyle \sum_{i=0}^{\infty} \scalar{[\bsym{\xi}_{i} \bsym{\xi}_{i}^{\intercal} + \bsym{\xi}_{i}^* \bsym{\xi}_{i}^{*\intercal} - \bsym{\xi}_{i}^*\bsym{\xi}_i^{\intercal}- \bsym{\xi}_{i}\bsym{\xi}_i^{*\intercal}]\nabla \bsym{u}}{\nabla \bsym{u}}{H} \\
&= \frac{1}{2} \displaystyle \sum_{i=0}^{\infty} \scalar{\bsym{\chi}_i\bsym{\chi}_i^\intercal \nabla \bsym{u}}{\nabla\bsym{u}}{H}.
\end{align*}
Hence the equality 
\begin{align*}
\Lambda &= \frac{1}{2}\displaystyle \sum_{i=0}^{\infty} -\scalar{[\bsym{\chi}_i\bsym{\xi}_i^\intercal - \bsym{\xi}_i^* \bsym{\chi}_i^\intercal] \nabla \bsym{u}}{\nabla \bsym{w}}{H} + \scalar{\bsym{\chi}_i\bsym{\chi}_i^\intercal \nabla \bsym{u}}{\nabla\bsym{u}}{H}. \\
\end{align*}
Remark that $\scalar{\bsym{\chi}_i\bsym{\chi}_i^\intercal \nabla \bsym{u}}{\nabla\bsym{u}}{H} \geq 0,$ can withdrawn since it is on the right hand side of equation \eqref{eq:majoration_w_chi}. As for the other terms, thanks to Young inequality and estimation \eqref{eq:estimee_energie_u}, \eqref{eq:estimee_energie_xi},
\begin{align*}
    \scalar{\bsym{\chi}_i\bsym{\xi}_i^\intercal  \nabla \bsym{u}}{\nabla \bsym{w}}{H} &\leq \norme{\bsym{\chi}_i}{L^\infty} \norme{\bsym{\xi}_i}{L^\infty} \norme{\bsym{u}}{V} \norme{\bsym{w}}{V} \\
    &\leq C \norme{\bsym{\chi}_i}{\alpha}^2 \norme{\bsym{u}}{V}^2 + \epsilon \norme{\bsym{w}}{V}^2.
\end{align*}
Clearly the second term in $\Lambda$ satisfies the same estimate.

We give an upper bound for all the terms from the second equation. The right hand side reduces to:
\begin{equation*}
	- \bigl(b(\bsym{u},\bsym{\chi}_i,A^\alpha\bsym{\chi}_i) + b(\bsym{\chi}_i,\bsym{u},A^\alpha\bsym{\chi}_i) + b(\bsym{w},\bsym{\xi}_i^*,A^\alpha\bsym{\chi}_i) + b(\bsym{\xi}_i^*,\bsym{w},A^\alpha\bsym{\chi}_i)\bigr).
\end{equation*} 
We have thanks to Sobolev embedding and Young inequality, the following three upper bound:
\begin{align*}
b(\bsym{u},\bsym{\chi}_i,A^\alpha\bsym{\chi}_i) &\leq C\normeH{\bsym{u}}^2\norme{\bsym{\chi}_i}{\alpha}^2 + \epsilon \norme{\bsym{\chi}_i}{s+\alpha}^2, \\
b(\bsym{\chi}_i,\bsym{u},A^\alpha\bsym{\chi}_i) &\leq C\norme{\bsym{\chi}_i}{\alpha}^2\norme{\bsym{u}}{V}^2 + \epsilon \norme{\bsym{\chi}_i}{s+\alpha}^2, \\
b(\bsym{w},\bsym{\xi}_i^*,A^\alpha\bsym{\chi}_i) &\leq C\normeH{\bsym{w}}^2\norme{\bsym{\xi}_i^*}{V}^2 + \epsilon \norme{\bsym{\chi}_i}{s+\alpha}^2. 
\end{align*}
As for the last term, thanks to Sobolev embedding $H^s \hookrightarrow L^{\infty}.$ We apply Young inequality with parameters $\frac{1}{2}, \frac{\theta}{2}$ and $\frac{1-\theta}{2}.$ It follows that: 
\begin{align*}
b(\bsym{\xi}_i^*,\bsym{w},A^\alpha\bsym{\chi}_i) &\leq \norme{\bsym{\xi}_i^*}{L^\infty}\norme{\bsym{w}}{V} \norme{\bsym{\chi}_i}{\alpha}^{\theta}\norme{\bsym{\chi}_i}{s+\alpha}^{1-\theta} \quad \theta = \frac{s-\alpha}{s}\\
&\leq C\norme{\bsym{\xi}_i^*}{L^\infty}^{2/\theta}\norme{\bsym{\chi}_i}{\alpha}^{2} + \epsilon \norme{\bsym{\chi}_i}{s+\alpha}^{2}+ \eta \norme{\bsym{w}}{V}^2.
\end{align*}
With these upper bounds, we can write{\footnote{In the estimates above, the summation over the mode index should of course be performed before applying Young’s inequality}}: 
\begin{multline*}
    e(t)\Big(\normeH{\bsym{w}}^2 + \frac{1}{2} \sum_{i=0}^{\infty} \norme{\bsym{\chi}_i}{\alpha}^2\Big) + \int_0^t \norme{\bsym{w}}{V}^2 + \frac{1}{2}  \sum_{i=0}^{\infty} \norme{\bsym{\chi}_i}{s+\alpha}^2 dr \\ \leq  \int_0^t e(r)\scalar[\bigg]{\bsym{w}}{\Big[\G{\bsym{\xi}}{\bsym{u}} - \G{\bsym{\xi}^*}{\bsym{u}^*}\Big] d\bsym{W}_r}{H} \\
    - \frac{1}{2} \int_0^t g'(r) e(r) \Big(\normeH{\bsym{w}}^2 + \sum_{i=0}^\infty \norme{\bsym{\chi}_i}{\alpha}^2\Big) dr  \\
    +\int_0^t \Big(\normeH{\bsym{w}}^2 +  \sum_{i=0}^\infty \norme{\bsym{\chi}_i}{\alpha}^2\Big) e(r)C\big(\norme{\bsym{u}}{V}^2 + \normeH{\bsym{u}}^2 + 1\big)dr.
\end{multline*}
In the expression of function $g,$ we choose $K = 2C$ and we are left with: 
\begin{multline*}
    e(t)\Big(\normeH{\bsym{w}}^2 + \frac{1}{2} \sum_{i=0}^{\infty} \norme{\bsym{\chi}_i}{\alpha}^2\Big) + \int_0^t \norme{\bsym{w}}{V}^2 + \frac{1}{2} \sum_{i=0}^{\infty} \norme{\bsym{\chi}_i}{s+\alpha}^2 dr \\ \leq \int_0^t e(r)\scalar[\bigg]{\bsym{w}}{\Big[\G{\bsym{\xi}}{\bsym{u}} - \G{\bsym{\xi}^*}{\bsym{u}^*}\Big] d\bsym{W}_r}{H}.
\end{multline*}
We now take the mean in above equation. Observe that 
$$
\norme{\G{\bsym{\xi}}{\bsym{u}}}{\mathcal{L}_2(H)}^2=\sum_{i=0}^\infty |\xi\cdot\nabla u|^2_H \leq \sum_{i=0}^\infty \norme{\xi}{L^\infty}\norme{\nabla u}{L^2}^2.
$$
By Sobolev embedding, \eqref{eq:estimee_energie_xi_s>1}, and \eqref{eq:estimation_limite}, we deduce that this is integrable in time. This is also true with $u^*, \,\xi^*$ instead of $u,\,\xi$. It follows that
the stochastic process $\displaystyle\int_0^t e(r)\scalar[\bigg]{\bsym{w}}{\Big[\G{\bsym{\xi}}{\bsym{u}} - \G{\bsym{\xi}^*}{\bsym{u}^*}\Big] d\bsym{W}_r}{H}$ is a martingale and therefore it is of mean $0.$ Hence
\begin{equation*}
    \esp[\bigg]{e(t)\Big(\normeH{\bsym{w}}^2 + \frac{1}{2} \sum_{i=0}^{\infty} \norme{\bsym{\chi}_i}{\alpha}^2\Big) + \int_0^t \norme{\bsym{w}}{V}^2 +  \sum_{i=0}^{\infty} \norme{\bsym{\chi}_i}{s+\alpha}^2 dr} \leq 0.
\end{equation*}
We deduce that for all $t\in [0,T]$
\begin{equation*}
\esp[\bigg]{e(t)\Big(\normeH{\bsym{w}}^2 + \frac{1}{2} \sum_{i=0}^{\infty} \norme{\bsym{\chi}_i}{\alpha}^2\Big)} = 0.
\end{equation*}
And therefore, for all $t\in[0,T],$ 
\begin{equation*}
    \normeH{\bsym{w}}^2 + \frac{1}{2} \sum_{i=0}^{\infty} \norme{\bsym{\chi}_i}{\alpha}^2 =0 \quad \mathbb{P}-a.s. 
\end{equation*}
Since it is a continuous function, we conclude that 
\begin{equation*}
    \mathbb{P}-a.s. \quad \normeH{\bsym{w}}^2 + \frac{1}{2} \sum_{i=0}^{\infty} \norme{\bsym{\chi}_i}{\alpha}^2 = 0 \quad \text{for all} \quad t\in [0,T].
\end{equation*}
We can now use the argument due to \rev{Gy\"ongy and Krylov} (and we refer to \cite{Gyongy_Krilov} or to the proof of Theorem 2.2 in \cite{Mikulevicius_Rozovskii_2005} for further details) and conclude that $(\bsym{u}_n, \bsym{\xi}^n)_{n}$ converges towards a unique solution in probability in the original stochastic basis. This proves the uniqueness statement in Theorem \ref{thm:well_posedness_2d} (ii).

\section{The 3D case}\label{sec:the3d_case}
In this section we consider the case $d=3$ and prove Theorem  \ref{thm:well_posedness_3d}. We fix $\gamma > 3/2$ and start 
with energy estimates. Consider the Galerkin solutions $(\bsym{u}_n, \bsym{\xi}^n)$ for all $n \in \N$ where $\bsym{\xi}^n = (\bsym{\xi}_i^n)_{i \in \N}.$ \rev{These solutions are global and well defined (see Section \ref{sec:approximation_scheme}),} and we have the following energy estimates:
\begin{equation}
\label{eq:estimee_energie_xi_3d}
\int_0^T \sum_{i=0}^{\infty} \norme{\bsym{\xi}^n_i}{s}^{2} dt \leq C_4(s), \hspace{10pt} \sum_{i=0}^{\infty} \sup_{0 \leq t \leq T} \normeH{\bsym{\xi}_i^n}^2 \leq C_4(s) \quad a.s.
\end{equation}
\begin{proof}
{Testing the projected noise equation against $\bsym{\xi}_i^n$ gives}
\begin{equation}
\label{eq:equation_xi_3d}
\partial_t \bsym{\xi}^n_i + A^{s} \bsym{\xi}^n_i + B^n(\bsym{\xi}^n_i,\bsym{u}_n) + B^n(\bsym{u}_n,\bsym{\xi}^n_i) = 0,
\end{equation}
\begin{equation}
\label{eq:estimee_energie_intermediaire_xi_3d}
\frac12\partial_t\normeH{\bsym{\xi}_i^n}^2+\norme{\bsym{\xi}_i^n}{s}^2
=-b(\bsym{u}_n,\bsym{\xi}_i^n,\bsym{\xi}_i^n)-b(\bsym{\xi}_i^n,\bsym{u}_n,\bsym{\xi}_i^n).
\end{equation}
{Using $H^1\hookrightarrow L^6$, $H^{1/2}\hookrightarrow L^3$, $H^{3/4}\hookrightarrow L^4$, and interpolation between $H$ and $H^s$, both trilinear terms are bounded by}
\[
C\norme{\bsym{u}_n}{V}\normeH{\bsym{\xi}_i^n}^{2-\frac{3}{2s}}\norme{\bsym{\xi}_i^n}{s}^{\frac{3}{2s}}.
\]
{Young's inequality yields, for every $\varepsilon>0$,}
\[
\frac12\partial_t\normeH{\bsym{\xi}_i^n}^2+ (1-\varepsilon)\norme{\bsym{\xi}_i^n}{s}^2
\leq C_\varepsilon\norme{\bsym{u}_n}{V}^2\normeH{\bsym{\xi}_i^n}^2.
\]
{Gronwall's lemma, \eqref{eq:estimee_energie_u}, and summation over $i$ give \eqref{eq:estimee_energie_xi_3d}.}
\end{proof}
These estimates enable us to prove the following tightness result:
\begin{lemme}
\label{lemme:tightness_3d}
In dimension $d=3,$ the following statements hold:
\begin{enumerate}[label=(\roman*)]
    \item The sequence $\bigl(\mathcal{L}(\bsym{u}_{n})\bigr)_{n \in \N}$ is tight in $L^{p}(0,T,H), \quad p >1.$

    \item The sequence $\bigl(\mathcal{L}(\bsym{\xi}_{i}^n)_{n \in \N}\bigr)_{i \in \N}$ is tight in $\bigl(L^{2}(0,T,H)\bigr)^\N$ 

    \item The sequence $\bigl(\mathcal{L}(\bsym{u}_{n})\bigr)_{n \in \N}$ is tight in $C^0\bigl(0,T,\domA{-\gamma}\bigr)$

    \item The sequence $\bigl(\mathcal{L}(\bsym{\xi}_i^n)_{n \in \N}\bigr)_{i \in \N}$ is tight in $\bigl(C^0\big(0,T,\domA{\frac{q}{2}}\big)\bigr)^\N, \quad q<0.$ 
    
\end{enumerate}    
\end{lemme}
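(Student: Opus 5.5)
We follow the same scheme as in the proof of Lemma~\ref{lemme:tightness_2d}. The new feature is that in three dimensions the noise modes are only controlled in $L^\infty(0,T,H)\cap L^2(0,T,\domA{s/2})$ by \eqref{eq:estimee_energie_xi_3d}, rather than in $H^{1+\alpha}$. As a consequence, the Itô correction $F$ only yields an $L^p$-in-time bound with $p>1$, not an $L^2$ one. The plan is to obtain, for each piece of the Galerkin equation, a uniform (a.s. or in expectation) bound in some $W^{a,r}(0,T,\domA{-\gamma})$. We then combine this with the $L^2(0,T,V)$ (resp. $L^2(0,T,\domA{s/2})$) bounds and use the Aubin--Lions--Simon and Flandoli--Gatarek compact embeddings, followed by Tychonoff for the countable product in $i$.

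\textbf{Step 1: velocity.} Write $\bsym{u}_n=J_1^n+\dots+J_5^n$ as before. The terms $A\bsym{u}_n$ and $B^n(\bsym{u}_n,\bsym{u}_n)$ are bounded in $L^2(0,T,\domA{-\gamma})$ by \eqref{eq:A-B_majoration} and \eqref{eq:estimee_energie_u}; in 3D the bound on $B$ still holds since $\gamma>5/4$. The key term is $F^n$. By \eqref{eq:F_majoration},
\[
\norme{\F[n]{\bsym{\xi}^n}{\bsym{u}_n}}{\domA{-\gamma}}\le \norme{\bsym{u}_n}{V}\sum_i\norme{\bsym{\xi}^n_i}{L^4}^2 .
\]
We interpolate $\norme{\bsym{\xi}_i}{L^4}\le C\norme{\bsym{\xi}_i}{3/4}\le C\normeH{\bsym{\xi}_i}^{1-\frac{3}{4s}}\norme{\bsym{\xi}_i}{s}^{\frac{3}{4s}}$, sum over $i$ with Hölder, and use \eqref{eq:estimee_energie_xi_3d}. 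This gives $\sum_i\norme{\bsym{\xi}_i^n}{L^4}^2\in L^{4s/3}(0,T)$ uniformly a.s. Hölder in time with $\norme{\bsym{u}_n}{V}\in L^2$ then places $F^n$ in $L^p(0,T,\domA{-\gamma})$, with $\frac1p=\frac12+\frac{3}{4s}$. This is exactly where $s>3/2$ enters: it gives $p>1$, equivalently the condition $\frac{3p}{s(2-p)}\le 2$ stated after Theorem~\ref{thm:well_posedness_3d}. Hence $J_1^n+\dots+J_4^n$ is bounded a.s. in $W^{1,p}(0,T,\domA{-\gamma})$. For the stochastic convolution $J_5^n$, the bound \eqref{eq:G_majoration} together with $\sup_t\sum_i\normeH{\bsym{\xi}_i^n}^2\le C_4$ and \eqref{eq:estimee_energie_u} gives $\mathbb E\norme{J_5^n}{W^{\delta,r}(0,T,\domA{-\gamma})}^r\le C$ for all $\delta<1/2$ and $r\ge2$, by \cite[Lemma~2.1]{FlandoliGatarek1995}.

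\textbf{Step 2: compactness for the velocity.} The embedding $L^2(0,T,V)\cap W^{1,p}(0,T,\domA{-\gamma})\Subset L^p(0,T,H)$ holds by Aubin--Lions--Simon. Since $\bsym{u}_n$ is also bounded in $L^\infty(0,T,H)$, the same gives compactness in $L^{p'}(0,T,H)$ for every finite $p'$, which covers all $p>1$ in (i). For (iii), we use $W^{1,p}\hookrightarrow W^{a,p}$ with $ap>1$, and $W^{\delta,r}$ with $\delta r>1$, both embedding compactly in $C^0(0,T,\domA{-\gamma'})$ for $\gamma'$ slightly larger. Since $\gamma>3/2$ is arbitrary, we simply relabel. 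Chebyshev then converts the bounds into tightness.

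\textbf{Step 3: noise modes.} For fixed $i$, the equation \eqref{eq:equation_xi_3d} gives $\partial_t\bsym{\xi}^n_i$ bounded in $L^2(0,T,\domA{-\gamma'})$ for $\gamma'\ge \max(\gamma,s/2)$. This uses $\norme{A^s\bsym{\xi}_i^n}{\domA{-s/2}}=\norme{\bsym{\xi}_i^n}{s}$ and \eqref{eq:A-B_majoration} with $\sup_t\normeH{\bsym{u}_n},\sup_t\normeH{\bsym{\xi}_i^n}$ bounded. Combined with $L^2(0,T,\domA{s/2})$ this yields compactness in $L^2(0,T,H)$, which is (ii). Combined with $L^\infty(0,T,H)$ and the Arzelà--Ascoli/Simon criterion, it yields compactness in $C^0(0,T,\domA{q/2})$ for $q<0$, which is (iv). Tychonoff's theorem handles the product over $i$.

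The main obstacle is Step 1's bound on $F^n$: the quasilinear coupling forces us to trade time integrability for spatial regularity of $\bsym{\xi}$. Checking that the exponents close with $p>1$ only when $s>3/2$ is the delicate point.
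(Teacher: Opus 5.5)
Your proposal is correct and follows essentially the paper's route. You use the same splitting $\bsym{u}_n=J_1^n+\dots+J_5^n$ and the same $H^{3/4}\hookrightarrow L^4$ interpolation, which makes $F^n$ bounded in $L^p(0,T,\domA{-\gamma})$ with $p=\frac{4s}{2s+3}$, and $p>1$ exactly when $s>3/2$. Your single H\"older step, using $\sup_t\sum_i\normeH{\bsym{\xi}_i^n}^2\le C_4$, is a cleaner version of the paper's double H\"older. The rest (compact embeddings, Chebyshev and Tychonoff) matches the paper.

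One small repair is needed in Step~2. $\bsym{u}_n$ itself is not bounded in $W^{1,p}$, because of $J_5^n$. Splitting the two pieces does not help there, since the $L^2(0,T,V)$ bound holds only for $\bsym{u}_n$ as a whole. Instead, note that both $W^{1,p}$ and $W^{\delta,r}$ (with $r\ge p$) embed continuously into $W^{\delta',p}(0,T,\domA{-\gamma})$ for $0<\delta'<\delta$. Then apply the compact embedding $L^2(0,T,V)\cap W^{\delta',p}(0,T,\domA{-\gamma})\Subset L^p(0,T,H)$.
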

\rev{The proof is similar to the case $d=2$.} We use similar arguments,  merely adjusting the Sobolev embeddings for higher dimensions. First, let us rewrite the system:
\begin{equation}
\left\{
\begin{aligned}
\bsym{u}_{n}(t) = J_{1}^n + J_{2}^n + J_{3}^n + J_{4}^n + J_{5}^n, \\[0.5em]
\bsym{\xi}^n_i(t) = K_{1}^n + K_{2}^n + K_{3}^n + K_{4}^n.
\end{aligned}
\right.
\end{equation}
\begin{proof}
\label{proof:tightness_3d}
\begin{enumerate}[label=(\roman*)]
\item Fix $p >1$ to be chosen below and $\delta = \frac{1}{2}.$ As in the case $d=2,$ we apply the following compact embedding result:
\begin{equation*}
L^p(0,T,V) \cap W^{\delta, 2}\bigl(0,T,\domA{-\gamma}\bigr) \hookrightarrow L^{p}(0,T,H).
\end{equation*}
Energy estimate \eqref{eq:estimee_energie_u} shows that the laws $\bigl(\mathcal{L}(\bsym{u}_{n})\bigr)_{n \in \N}$ are bounded in probability in $L^2(0,T,V).$ We now prove that they are bounded in probability in $W^{\alpha, p}\bigl(0,T,\domA{-\gamma}\bigr).$ The proof of Lemma \ref{lemme:tightness_2d} adapts for the following terms and we have: 
\begin{align*}
\normeH{J_{1}^n}^2 &\leq K \quad a.s. \\
\norme{J_2^n}{W^{1, 2}(0,T,V')}^{2} &\leq K \quad a.s. \\
 \norme{J_3^n}{W^{1, 2}\bigl(0,T,\domA{-\gamma}\bigr)}^2 &\leq K \quad a.s. \\
 \norme{J_5^n}{W^{\delta, q}\bigl(0,T,\domA{-\gamma}\bigr)}^2 &\leq K \quad a.s. \quad q>2.
\end{align*}
For $J_4^n,$ thanks to \eqref{eq:F_majoration}, we have:
\begin{align*}
\norme{J_4^n}{W^{1, p}\bigl(0,T,\domA{-\gamma}\bigr)}^2 &\leq \int_0^T \norme{\F[n]{\bsym{\xi}^n}{\bsym{u}_n}}{\domA{-\gamma}}^p dr \\
&\leq \int_0^T \Big(\sum_{i=0}^{\infty}  \norme{\bsym{\xi}_i^n}{L^4}^2 \norme{\bsym{u}_n}{V}\Big)^p dr .
\end{align*}
When $d=3,$ we have $H^{3/4} \hookrightarrow L^4.$ By interpolation, since $0 < 3/4 < s,$ we have:
\begin{equation*}
\norme{\bsym{\xi}_i^n}{3/4} \leq \normeH{\bsym{\xi}_i^n}^{\frac{4s-3}{4s}} \norme{\bsym{\xi}_i^n}{s}^{\frac{3}{4s}}.    
\end{equation*}
Thus:
\begin{equation*}
\norme{J_4^n}{W^{1, p}\bigl(0,T,\domA{-\gamma}\bigr)}^2 \leq  \int_0^T \norme{\bsym{u}_n}{V}^p \Big(\sum_{i=0}^{\infty}  \normeH{\bsym{\xi}_i^n}^{\frac{4s-3}{2s}} \norme{\bsym{\xi}_i^n}{s}^{\frac{3}{2s}}\Big)^p  dr.  
\end{equation*}
\rev{By Hölder's inequality, with} $\frac{1}{q} + \frac{1}{\tilde{q}} = 1,$ and estimate \eqref{eq:estimee_energie_xi_3d}, 
\begin{multline*}
\norme{J_4^n}{W^{1, p}\bigl(0,T,\domA{-\gamma}\bigr)}^2 \leq C  \Big(\int_0^T \norme{\bsym{u}_n}{V}^{q p} dr\Big)^{1/q} \\\Big(\int_0^T \Big(\sum_{i=0}^{\infty} \normeH{\bsym{\xi}_i^n}^{\frac{4s-3}{2s}} \norme{\bsym{\xi}_i^n}{s}^{\frac{3}{2s}}\Big)^{\tilde{q} p} dr\Big)^{1/\tilde{q}} .  
\end{multline*}
\rev{Applying Hölder's inequality once more to the sum, with} $\frac{1}{q_1} + \frac{1}{q_2} = 1,$ we have: 
\begin{multline*}
\norme{J_4^n}{W^{1, p}\bigl(0,T,\domA{-\gamma}\bigr)}^2 \leq C \Big(\int_0^T \norme{\bsym{u}_n}{V}^{q p} dr\Big)^{1/q} \\
\Big(\int_0^T \Big(\sum_{i=0}^{\infty} \normeH{\bsym{\xi}_i^n}^{q_1\frac{4s-3}{2s}}\Big)^{\frac{p\tilde{q}}{q_1}} \Big(\sum_{i=0}^{\infty} \norme{\bsym{\xi}_i^n}{s}^{q_2\frac{3}{2s}} \Big)^{\frac{p\tilde{q}}{q_2}}dr\Big)^{1/\tilde{q}}. 
\end{multline*}
We choose $q = \frac{2}{p}$ such that $\tilde{q} = \frac{2}{2-p}.$ Thanks to \eqref{eq:estimee_energie_u} and \eqref{eq:estimee_energie_xi_3d}, we are left with
\begin{equation*}
\norme{J_4^n}{W^{1, p}\bigl(0,T,\domA{-\gamma}\bigr)}^2 \leq C \Big(\int_0^T \Big(\sum_{i=0}^{\infty} \norme{\bsym{\xi}_i^n}{s}^{q_2\frac{3}{2s}} \Big)^{\frac{2p}{(2-p)q_2}}dr\Big)^{1/\tilde{q}}.
\end{equation*}
We take $q_2 = \frac{2p}{2-p}.$ Hence
\begin{equation*}
\norme{J_4^n}{W^{1, p}\bigl(0,T,\domA{-\gamma}\bigr)}^2 \leq C \Big(\int_0^T \sum_{i=0}^{\infty} \norme{\bsym{\xi}_i^n}{s}^{\frac{2p}{2-p}\frac{3}{2s}} dr\Big)^{1/\tilde{q}}.
\end{equation*}
We can conclude thanks to estimate \eqref{eq:estimee_energie_xi_3d} that 
\begin{equation*}
\norme{J_4^n}{W^{1, p}\bigl(0,T,\domA{-\gamma}\bigr)}^2 \leq K  \quad a.s. 
\end{equation*}
whenever $\frac{3p}{s(2-p)} \leq 2.$ Since $s > 3/2,$ we can find $p \in (1,2)$ so that this holds. Therefore $\mathcal{L}(\bsym{u}_n)$ is tight in $L^p(0,T,H).$
\item Similar to the case $d=2.$
\item Let $p > 1$ and $ap >1.$ We apply as in the case $d= 2$ the compact embedding 
\begin{equation*}
W^{a,p}\bigl(0,T,\domA{-\gamma}\bigr) \hookrightarrow C^0\bigl(0,T,\domA{-\beta} \bigr).
\end{equation*}
\rev{All the upper bounds are obtained as in the case $d=2$, except for $J_4^n$.} However, we just proved that
\begin{equation*}
\norme{J_4^n}{W^{1, p}\bigl(0,T,\domA{-\gamma}\bigr)}^2 \leq K \quad a.s.
\end{equation*}
As $p > 1,$ we can conclude that $\big(\mathcal{L}(\bsym{u}_n)\big)_{n \in \N}$ is tight in $C^0\big(0,T,\domA{-\gamma}\big).$
\item Similar to the case $d=2.$
\end{enumerate} 
\end{proof}
For all $q <0,$ for all $ p > 1,$ the family of laws of the triplet $(\bsym{u}_n, \bsym{\xi}^n, \bsym{W}^n)$ is tight in 
\begin{equation*}
\bigl(L^{p}(0,T,H) \cap C^0\bigl(0,T,\domA{-\gamma}\bigr)\bigr) \times \bigl(L^2(0,T,H) \cap C^0\big(0,T,\domA{\frac{q}{2}}\big)\bigr)^{\N} \times C^0\bigl(0,T,\domA{-\gamma}\bigr).
\end{equation*}
\rev{By the Skorokhod representation theorem, there exist a stochastic basis} $\bigl(\bar{\Omega}, \bar{\mathcal{F}}, \{\bar{\mathcal{F}}_t\}_{t \geq 0}, \bar{\mathbb{P}}, \bar{\bsym{W}}^{n}\bigr),$ a sequence of random variables taking values in the above space $(\bar{\bsym{u}}_n, \bar{\bsym{\xi}}^n, \bsym{W}^n),$ and a limit triple $(\bar{\bsym{u}}, \bar{\bsym{\xi}}^{\infty}, \bar{\bsym{W}})$ such that $(\bar{\bsym{u}}_n, \bar{\bsym{\xi}}^n, \bar{\bsym{W}}^n)$ has the same law as $(\bsym{u}_n, \bsym{\xi}^n, \bsym{W}^n).$ \rev{Up to a subsequence, these sequences converge}
\begin{equation}
\label{eq:convergence_3d}
\begin{aligned}
\bar{\bsym{u}}_n &\longrightarrow \bar{\bsym{u}} \quad \text{in} \quad L^{p}(0,T,H) \cap C^0\bigl(0,T,\domA{-\gamma}\bigr)  \quad \bar{\mathbb{P}}-a.s. \\
\bar{\bsym{\xi}}^n &\longrightarrow \bar{\bsym{\xi}}^{\infty} \quad \text{in} \quad \bigl(L^2(0,T,H) \cap C^0\big(0,T,\domA{\frac{q}{2}}\big)\bigr)^{\N} \quad \bar{\mathbb{P}}-a.s. \\
\bar{\bsym{W}}^n &\longrightarrow \bar{\bsym{W}} \quad \text{in} \quad C^0\bigl(0,T,\domA{-\gamma}\bigr) \quad \bar{\mathbb{P}}-a.s.
\end{aligned}
\end{equation} 
\rev{This provides a candidate solution. It remains to verify that it satisfies the equation.} Consider again the variational problem:
\begin{equation}
\left\{
	\begin{aligned}
\scalar{\bar{\bsym{u}}_{n}(t) - \bar{\bsym{u}}_{n}(0)}{\bsym{y}}{H} + \displaystyle\int_{0}^t \scalar{A\bar{\bsym{u}}_n}{\bsym{y}}{H} + \scalar{B^n(\bar{\bsym{u}}_n,\bar{\bsym{u}}_n)}{\bsym{y}}{H} \\
+ \scalar{\F[n]{\bar{\bsym{\xi}}^n}{\bar{\bsym{u}}_n}}{\bsym{y}}{H} dr = \scalar[\bigg]{\displaystyle\int_0^t \G[n]{\bar{\bsym{\xi}}^n}{\bar{\bsym{u}}_n}d\bar{\bsym{W}}^n_r}{\bsym{y}}{H}, \\[0.5em]
\scalar{\bar{\bsym{\xi}}^n_i(t) - \bar{\bsym{\xi}}^n_i(0)}{\bsym{z}}{H} + \displaystyle \int_{0}^t \scalar{A^s \bar{\bsym{\xi}}^n_i}{\bsym{z}}{H} + \scalar{B^n(\bar{\bsym{\xi}}^n_i,\bar{\bsym{u}}_n)}{\bsym{z}}{H}  \\
+ \scalar{B^n(\bar{\bsym{u}}_n,\bar{\bsym{\xi}}^n_i)}{\bsym{z}}{H} dr = 0, \qquad i \in \N.
	\end{aligned}
	\right.
\end{equation}
\begin{lemme}
\label{lemme:convergence_3d}
Consider the stochastic basis $\bigl(\bar{\Omega}, \bar{\mathcal{F}}, \{\bar{\mathcal{F}}_t\}_{t \geq 0}, \bar{\mathbb{P}}, \bar{\bsym{W}}^{n}\bigr).$ \rev{Up to a subsequence, the following almost sure convergences hold:} 

\begin{enumerate}[label=(\roman*)]
    \item $J_i^n \longrightarrow J_i \quad \bar{\mathbb{P}}-a.s. \quad i \in \{1,..,5\}.$
    \item $K_i^n \longrightarrow K_i \quad \bar{\mathbb{P}}-a.s. \quad i \in \{1,..,4\}.$
\end{enumerate}
\end{lemme}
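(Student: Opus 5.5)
We follow the structure of the proof of Lemma \ref{lemme:convergence}, fixing $t\in[0,T]$ and $\bsym{y},\bsym{z}\in\domA{\gamma}$, and treating each term separately. The only changes come from the three-dimensional Sobolev embeddings and the weaker convergences \eqref{eq:convergence_3d}. The velocity now converges only in $L^p(0,T,H)$ for some $p\in(1,2)$ chosen as in Lemma \ref{lemme:tightness_3d}. However, since $\sup_t\normeH{\bar{\bsym{u}}_n}^2\le C_1$ almost surely, interpolation between $L^p(0,T,H)$ and $L^\infty(0,T,H)$ upgrades this to convergence in $L^r(0,T,H)$ for every $r<\infty$, in particular $r=2$. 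With this observation, the linear terms $J_1^n,J_2^n,K_1^n,K_2^n$ and the quadratic terms $J_3^n,K_3^n,K_4^n$ are handled exactly as in dimension two. For $K_2^n$ we replace $A$ by $A^s$ and take $\bsym{z}\in\domA{\gamma}$ with $\gamma\ge s$, or argue by density, using $\int_0^t(\bar{\bsym{\xi}}_i^n-\bar{\bsym{\xi}}_i^\infty,A^s\bsym{z})_H\,ds$. The projection errors are controlled through \eqref{eq:A-B_majoration} and \eqref{eq:convergence_norme_proj}, and the bilinear differences through $L^2(0,T,H)$ convergence of $\bar{\bsym{u}}_n$ and of each fixed mode $\bar{\bsym{\xi}}_i^n$.

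The main obstacle is $J_4^n$, the It\^o-correction term $F$, and in particular the piece $J_{4,2}^n$. Here we need
\[
\int_0^t\norme{\bar{\bsym{u}}_n}{V}\sum_i\norme{\bar{\bsym{\xi}}_i^n\bar{\bsym{\xi}}_i^{n\intercal}-\bar{\bsym{\xi}}_i^\infty\bar{\bsym{\xi}}_i^{\infty\intercal}}{L^2}\,ds\to0 .
\]
The two-dimensional argument used $H^{1/2}\hookrightarrow L^4$ and a uniform-in-time bound in $\domA{\alpha/2}$, which is no longer available since only the $H$-level bound \eqref{eq:estimee_energie_xi_3d} holds. We instead use $H^{3/4}\hookrightarrow L^4$ and the interpolation
\[
\norme{\bsym{\xi}}{3/4}\le\normeH{\bsym{\xi}}^{1-\frac{3}{4s}}\norme{\bsym{\xi}}{s}^{\frac{3}{4s}},
\]
applied to the difference $\bar{\bsym{\xi}}_i^n-\bar{\bsym{\xi}}_i^\infty$ and to $\bar{\bsym{\xi}}_i^n$. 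The resulting integrand is then split by Hölder in time and Cauchy--Schwarz in $i$, exactly as in the estimate of $J_4^n$ in Lemma \ref{lemme:tightness_3d}. The factor $\norme{\bar{\bsym{u}}_n}{V}$ goes in $L^2(0,T)$, the $H^s$ norms in $L^2(0,T;\ell^2)$ (bounded by $C_4(s)$ and valid also for the limit by lower semicontinuity), and the remaining small factor is a positive power of $\sum_i\norme{\bar{\bsym{\xi}}_i^n-\bar{\bsym{\xi}}_i^\infty}{L^2(0,T,H)}^2$. The condition $s>3/2$ is precisely what makes the exponent bookkeeping close: the power of $\norme{\cdot}{s}$ is $\frac{3}{2s}<1$, so Hölder leaves room. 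That last sum tends to $0$ by dominated convergence over $i$. Each term converges by \eqref{eq:convergence_3d}, and the terms are dominated by $4\sup_t(\normeH{\bar{\bsym{\xi}}_i^n}^2+\normeH{\bar{\bsym{\xi}}_i^\infty}^2)$, which is summable. The summability must be made uniform in $n$, and this is the delicate point. Following the dimension-two argument, we would invoke the mode-wise Gronwall bound
\[
\sup_t\normeH{\bar{\bsym{\xi}}_i^n}^2\le c(|u_0|_H,T)\normeH{\bsym{\xi}_{i,0}}^2,
\]
which follows from the proof of \eqref{eq:estimee_energie_xi_3d} in the same way as \eqref{truc}. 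Its right-hand side is summable in $i$ independently of $n$.

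The other two pieces of $J_4^n$ are simpler. $J_{4,1}^n$ is treated as before via \eqref{eq:F_majoration}, the $L^4$ bound above and \eqref{eq:convergence_norme_proj}. For $J_{4,3}^n$ we move $F$ onto the test function, since $F[\bar{\bsym{\xi}}^\infty]$ is symmetric. With $\bsym{y}$ smooth, $F[\bar{\bsym{\xi}}^\infty](\bsym{y})\in L^2(0,T,H)$ provided $\sum_i\norme{\bar{\bsym{\xi}}_i^\infty}{W^{1,4}}^2$-type quantities are integrable in time; these follow from the $L^2(0,T,H^s)$ bound with $s>3/2$, using $H^s\hookrightarrow W^{1,3}$ and interpolation. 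The convergence then follows from the $L^2(0,T,H)$ convergence of $\bar{\bsym{u}}_n$. Finally, $J_5^n$ is handled with Lemma \ref{lemme:convergence_stochastique} exactly as before, since \eqref{eq:G_majoration} involves only $H$ norms. We split into $a_n,b_n,c_n$: $a_n$ vanishes because of the factor $\lambda_{n+1}^{-(\tilde\gamma-\gamma)}$, $b_n$ by $L^2(0,T,H)$ convergence of $\bar{\bsym{u}}_n$ together with $\sup_t\sum_i\normeH{\bar{\bsym{\xi}}_i^n}^2\le C_4$, and $c_n$ by the dominated convergence over $i$ described above.
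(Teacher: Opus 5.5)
Your proposal is correct and follows the paper's proof: the same term-by-term reduction to the case $d=2$, with the only new work in $J_{4,2}^n$ done through $H^{3/4}\hookrightarrow L^4$, interpolation between $H$ and $H^s$, H\"older's inequality and dominated convergence over the modes. You are in fact more careful than the paper in keeping the factor $\norme{\bar{\bsym{u}}_n}{V}$, in making the exponent count (and hence the role of $s>3/2$) explicit, in securing an $n$-uniform dominating sequence through the mode-wise Gronwall bound, and in upgrading the $L^p(0,T,H)$ convergence of $\bar{\bsym{u}}_n$ to convergence in every $L^r(0,T,H)$, $r<\infty$; one small slip is that for $3/2<s<5/2$ your bounds only yield $\F{\bar{\bsym{\xi}}^{\infty}}{\bsym{y}}\in L^{4s/5}(0,T,H)$ rather than $L^{2}(0,T,H)$, so in $J_{4,3}^n$ you should pair it with the convergence of $\bar{\bsym{u}}_n$ in $L^{r}(0,T,H)$, $r=4s/(4s-5)$, which your interpolation remark already provides (the paper's own treatment of $J_{4,3}^n$ glosses over the same point).
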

\begin{proof}
The proof that each term converges to its limit is exactly the same in the case $d=2$ except for  $\scalar{\F[n]{\bar{\bsym{\xi}}^n}{\bar{\bsym{u}}_n}}{\bsym{y}}{H}.$ We write:
\begin{equation*}
\begin{aligned}
\int_0^t \scalar{\F[n]{\bar{\bsym{\xi}}^n}{\bar{\bsym{u}}_n}}{\bsym{y}}{H} - \scalar{\F{\bar{\bsym{\xi}}^{\infty}}{\bar{\bsym{u}}}}{\bsym{y}}{H} dr = \int_0^t \scalar{\F{\bar{\bsym{\xi}}^n}{\bar{\bsym{u}}_n}}{P_n\bsym{y}-\bsym{y}}{H} \\
+ \scalar{\F{\bar{\bsym{\xi}}^n}{\bar{\bsym{u}}_n} - \F{\bar{\bsym{\xi}}^{\infty}}{\bar{\bsym{u}}_n}}{\bsym{y}}{H} \\
+ \scalar{\F{\bar{\bsym{\xi}}^{\infty}}{\bar{\bsym{u}}_n}- \F{\bar{\bsym{\xi}}^{\infty}}{\bar{\bsym{u}}}}{\bsym{y}}{H} dr \\
= J_{4,1}^n + J_{4,2}^n + J_{4,3}^n.
\end{aligned} 
\end{equation*}
Both $J_{4,1}^n$ and $J_{4,3}^n$ can be treated as in the case $d=2.$ As for $J_{4,2}^n,$ we have:
\begin{equation*}
\begin{aligned}
\int_0^t \scalar{\F{\bar{\bsym{\xi}}^n}{\bar{\bsym{u}}_n} - \F{\bar{\bsym{\xi}}^{\infty}}{\bar{\bsym{u}}_n}}{\bsym{y}}{H} dr \leq \norme{\nabla \bsym{y}}{L^{\infty}} \\\int_0^t \norme{\bar{\bsym{u}}_n}{V} \normeH{\sum_{i=0}^{\infty}\bigl(\bar{\bsym{\xi}}_i^n\bar{\bsym{\xi}}_i^{n\intercal} - \bar{\bsym{\xi}}_i^{\infty}\bar{\bsym{\xi}}_i^{\infty\intercal}\bigr)} dr. 
\end{aligned}
\end{equation*}
Under the integral: 
\begin{align*}
\sum_{i=0}^{\infty} \normeH{\bigl(\bar{\bsym{\xi}}_i^n\bar{\bsym{\xi}}_i^{n\intercal} - \bar{\bsym{\xi}}_i^{\infty}\bar{\bsym{\xi}}_i^{\infty\intercal}\bigr)} ds &=  \sum_{i=0}^{\infty} \normeH{\bar{\bsym{\xi}}_i^n \bigl(\bar{\bsym{\xi}}_i^{n\intercal}-\bar{\bsym{\xi}}_i^{\infty\intercal}\bigr) + \bigl(\bar{\bsym{\xi}}_i^n-\bar{\bsym{\xi}}_i^{\infty}\bigr)\bar{\bsym{\xi}}_i^{\infty\intercal}} \\
&\leq 2 \sum_{i=0}^{\infty} \norme{\bar{\bsym{\xi}}_i^n}{L^4} \norme{\bar{\bsym{\xi}}_i^n - \bar{\bsym{\xi}}_i^{\infty}}{L^4}.
\end{align*}
By Sobolev embedding, $H^{3/4} \hookrightarrow L^4.$ Hence
\begin{equation*}
\sum_{i=0}^{\infty} \norme{\bar{\bsym{\xi}}_i^n}{L^4} \norme{\bar{\bsym{\xi}}_i^n - \bar{\bsym{\xi}}_i^{\infty}}{L^4} \leq \sum_{i=0}^{\infty} \norme{\bar{\bsym{\xi}}_i^n}{3/4} \norme{\bar{\bsym{\xi}}_i^n - \bar{\bsym{\xi}}_i^{\infty}}{3/4}.
\end{equation*}
We apply Cauchy-Schwarz inequality and interpolate $0< 3/4 < s:$
\begin{equation*}
\sum_{i=0}^{\infty} \norme{\bar{\bsym{\xi}}_i^n}{L^4} \norme{\bar{\bsym{\xi}}_i^n - \bar{\bsym{\xi}}_i^{\infty}}{L^4} \leq \sum_{i=0}^{\infty} \normeH{\bar{\bsym{\xi}}_i^n}^{\frac{4s-3}{2s}} \norme{\bar{\bsym{\xi}}_i^n}{s}^{\frac{3}{2s}} \sum_{i=0}^{\infty} \norme{\bar{\bsym{\xi}}_i^n - \bar{\bsym{\xi}}_i^{\infty}}{3/4}^{2}.
\end{equation*}
Applying now Cauchy-Schwarz inequality on the integral, we obtain: 
\begin{multline*}
\int_0^t \sum_{i=0}^{\infty} \norme{\bar{\bsym{\xi}}_i^n}{L^4} \norme{\bar{\bsym{\xi}}_i^n - \bar{\bsym{\xi}}_i^{\infty}}{L^4} dr \leq \int_0^t \Bigl(\sum_{i=0}^{\infty} \normeH{\bar{\bsym{\xi}}_i^n}^{\frac{4s-3}{2s}} \norme{\bar{\bsym{\xi}}_i^n}{s}^{\frac{3}{2s}}\Bigr)^2 dr \\
\int_0^t \Bigl(\sum_{i=0}^{\infty} \norme{\bar{\bsym{\xi}}_i^n - \bar{\bsym{\xi}}_i^{\infty}}{3/4}^{2}\Bigr)^2 dr.
\end{multline*}
Thanks to \eqref{eq:estimee_energie_xi_3d}, the first integral is bounded. The second integral goes to zero by dominated convergence, indeed for each $i$ we have convergence in $L^2(O,T;\mathcal D(A^\zeta)$ for any $\zeta<s$ by interpolation.  All the other convergences are identical as the ones obtained in the proof of the case $d=2$.
\end{proof}
Therefore the couple $(\bsym{u}, \bsym{\xi}^{\infty})$ satisfies the variational problem with $s > 3/2$ for all $\bsym{y}, \bsym{z} \in \domA{\gamma}, \gamma > 3/2$ and almost surely in $(t, \omega) \in [0,T] \times \bar{\Omega}:$
\begin{equation}
\label{eq:pb_variationel_solution_3d}
\left\{
	\begin{aligned}
\scalar{\bar{\bsym{u}}(t) - \bar{\bsym{u}}(0)}{\bsym{y}}{H} + \displaystyle\int_{0}^t \scalar{A\bar{\bsym{u}}}{\bsym{y}}{H} + \scalar{B(\bar{\bsym{u}},\bar{\bsym{u}})}{\bsym{y}}{H} \\
+ \scalar{\F{\bar{\bsym{\xi}}^{\infty}}{\bar{\bsym{u}}}}{\bsym{y}}{H} ds = \scalar[\bigg]{\displaystyle\int_0^t \G{\bar{\bsym{\xi}}^{\infty}}{\bar{\bsym{u}}}d\bar{\bsym{W}}_s}{\bsym{y}}{H}, \\[0.5em]
\scalar{\bar{\bsym{\xi}}_i^{\infty}(t) - \bar{\bsym{\xi}}_i^{\infty}(0)}{\bsym{z}}{H} + \displaystyle \int_{0}^t \scalar{A^s \bar{\bsym{\xi}}_i^{\infty}}{\bsym{z}}{H} + \scalar{B(\bar{\bsym{\xi}}_i^{\infty},\bar{\bsym{u}})}{\bsym{z}}{H} \\
+ \scalar{B(\bar{\bsym{u}},\bar{\bsym{\xi}}_i^{\infty})}{\bsym{z}}{H} ds = 0, \qquad i \in \N.
	\end{aligned}
	\right.
\end{equation}
This concludes the existence result in the case $d=3$ and completes the proof of Theorem \ref{thm:well_posedness_3d}.

\paragraph*{Disclosure statement:}
On behalf of all authors, the corresponding author states that there is no conflict of interest.

\paragraph*{Acknowledgements:}
The authors acknowledge the support of the ERC EU project 856408-STUOD.

\newpage

\bibliographystyle{plain}
\bibliography{biblio}

\end{document}